\newtheorem{theorem}{Theorem}[section]
\newtheorem{lemma}[theorem]{Lemma}
\newtheorem{proposition}[theorem]{Proposition}
\newtheorem{corollary}[theorem]{Corollary} 
\theoremstyle{definition}
\newtheorem{remark}[theorem]{Remark} 
\newtheorem{example}[theorem]{Example} 
\newtheorem{question}[theorem]{Question}
\def\D{\mathrm{D}}
\begin{document}

\title[Dense orderings]{Dense orderings in the space of left-orderings of a group \footnotetext{2000 Mathematics Subject
Classification. Primary 06F15}}

\author[Adam Clay and Tessa Reimer]{Adam Clay and Tessa Reimer}
\thanks{Adam Clay was partially supported by NSERC grant RGPIN-2014-05465}
\address{Department of Mathematics, 420 Machray Hall, University of Manitoba, Winnipeg, MB, R3T 2N2.}
\email{Adam.Clay@umanitoba.ca, reimer46@myumanitoba.ca}
\urladdr{http://server.math.umanitoba.ca/~claya/}

\begin{abstract}
Every left-invariant ordering of a group is either discrete, meaning there is a least element greater than the identity, or dense.  Corresponding to this dichotomy, the spaces of left, Conradian, and bi-orderings of a group are naturally partitioned into two subsets.  This note investigates the structure of this partition, specifically the set of dense orderings of a group and its closure within the space of orderings.  We show that for bi-orderable groups this closure will always contain the space of Conradian orderings---and often much more.  In particular, the closure of the set of dense orderings of the free group is the entire space of left-orderings.
\end{abstract}
\maketitle
\begin{center}
\today
\end{center}

\section{Introduction}
A group $G$ is \textit{left-orderable} if there is a strict total ordering $<$ of its elements such that $g<h$ implies $fg<fh$ for all $f, g,h \in G$.  Stronger than the notion of left-orderability is  Conradian left-orderability: a left-ordering of a group $G$ is said to be \textit{Conradian} if for every pair of elements $g, h \in G$ with $1<g, h$ there exists $n>0$ such that $1 < g^{-1}hg^n$.  This turns out to be equivalent to requiring that $1 < g^{-1}hg^2$ for all such pairs of elements \cite{Navas10}. Stronger still is the requirement that $G$ admit a left-ordering satisfying $g<h$ implies $gf<hf$ for all $f, g,h \in G$, in which case $<$ is called a \textit{bi-ordering} and $G$ is called bi-orderable.  It is straightforward to see that every bi-ordering is a Conradian left-ordering.  Given a left-ordering $<$ of $G$ (resp. Conradian ordering or bi-ordering), the pair $(G, <)$ will be called a left-ordered group (resp. Conradian ordered or bi-ordered).

Every left-ordering of $G$ can be uniquely identified with its \textit{positive cone} $P = \{ g \in G \mid g>1\}$, which is a subset of $G$ satisfying
\begin{enumerate}
\item $P \sqcup P^{-1} \sqcup \{ 1 \} = G$
\item $P \cdot P \subset P$.
\end{enumerate}
Conversely, every subset of $G$ satisfying the two properties above determines a left-ordering via the prescription $g<h$ if and only if $g^{-1}h \in P$.  A positive cone $P$ is the positive cone of a Conradian left-ordering if, in addition to the two properties above, it satisfies: (3a) If $g, h \in P$ then $g^{-1}hg^2 \in P$.  A positive cone $P$ of a left-ordering is the positive cone of a bi-ordering if it satisfies: (3b) $gPg^{-1} \subset P$ for all $g \in G$.

For a fixed group $G$, if we denote the collections of all positive cones of left-orderings, Conradian orderings and bi-orderings of $G$ by $\mathrm{LO}(G)$, $\mathrm{CO}(G)$ and $\mathrm{BO}(G)$ respectively, then we have $\mathrm{BO}(G) \subset \mathrm{CO}(G) \subset \mathrm{LO}(G)$.  Each of these sets can be topologized so as to become a totally disconnected compact Hausdorff space, as follows.

Let $\mathcal{P}(G)$ denote the power set of $G$ and observe that $\mathrm{LO}(G) \subset \mathcal{P}(G)$.  The power set can be identified with $\{0, 1\}^G$, and thus can be equipped with the product topology.  This makes $\mathcal{P}(G)$ into a totally disconnected Hausdorff space, which is compact by Tychonoff's theorem.  One checks that properties (1) and (2) above define a closed subset of $\mathcal{P}(G)$ (similarly for (3a) and (3b)), so that $\mathrm{LO}(G) \subset \mathcal{P}(G)$ is closed, and hence compact when equipped with the subspace topology.  See the beginning of Section \ref{section:abelian} for a description of a subbasis for the topology on $\mathrm{LO}(G)$.  Similarly, each of $\mathrm{CO}(G)$ and $\mathrm{BO}(G)$ are closed and hence compact.   

We call a left-ordering $<$ of a group $G$ \textit{discrete} if every element in $(G, <)$ has an immediate predecessor and successor, which is equivalent to its positive cone $P = \{ g \in G \mid g>1\}$ having a smallest element.  A left-ordering of a group $G$ which is not discrete is \textit{dense}, in the sense that whenever $g, h \in G$ satisfy $g<h$ there exists $f \in G$ with $g<f<h$.  Equivalently, the positive cone of the ordering does not have a least element.   Throughout this note the set of positive cones of dense left-orderings of the group $G$ will be denoted $\mathrm{D}(G)$.

Thus each of the spaces $\mathrm{LO}(G)$, $\mathrm{CO}(G)$ and $\mathrm{BO}(G)$ admits a decomposition into two subsets: the set of dense orderings and the discrete orderings.  Our work investigates how the nesting $\mathrm{BO}(G) \subset \mathrm{CO}(G) \subset \mathrm{LO}(G)$ behaves with regards to this dichotomy.  We show:

\begin{theorem}
If $G$ is a bi-orderable group which is not isomorphic to the integers then $\mathrm{CO}(G) \subset \overline{\D(G)}$.
\end{theorem}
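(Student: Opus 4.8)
The plan is to verify the closure condition on a subbasis. A subbasic neighbourhood of a positive cone constrains only finitely many coordinates of $\{0,1\}^G$, so it suffices to fix a Conradian positive cone $P$ and a finite set $F\subseteq G\setminus\{1\}$ and to produce a \emph{dense} left-ordering whose positive cone agrees with $P$ on $F$. If $P$ is already dense there is nothing to do, so I would assume $P$ is discrete, with least positive element $a$; then $C=\langle a\rangle\cong\Z$ is the minimal convex subgroup of $(G,P)$, and since $G\not\cong\Z$ we have $C\subsetneq G$.

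The guiding model is the lexicographic ordering of $\Z^2$, which is discrete but is a limit of the dense ``irrational'' orderings: tilting the defining linear functional by a small amount interleaves the cosets of the bottom copy of $\Z$, destroys the least element, and for a small enough tilt changes no signs inside $F$. Since $G$ is bi-orderable, fix a bi-ordering with positive cone $Q$; the aim is to use $Q$ to perform an analogous densification. The obstruction to copying the $\Z^2$ recipe verbatim is that the bottom subgroup $C\cong\Z$ has only discrete orderings and cannot be densified internally, so the density must instead be imported from a convex block of rank at least two.

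Concretely, I would look for a subgroup $N$ with $C\le N$ that is normal in $G$, convex for $P$, and carries a dense left-ordering agreeing with $P|_N$ on $F\cap N$; gluing such an ordering of $N$ beneath the $P$-induced ordering of $G/N$ yields a left-ordering of $G$ that agrees with $P$ off $N$, agrees with $P$ on $F$, and is dense because its minimal convex subgroup $N$ has no least positive element. Bi-orderability enters twice here: it supplies, on an abelian (or suitably large) convex piece, the irrational dense orderings that $\Z$ lacks, and the bi-invariance $gQg^{-1}=Q$ is what makes the reshuffling compatible with left multiplication. The integral Heisenberg group is the instructive test case: the least element $a=[x,y]$ is a commutator, so it is invisible to every homomorphism $G\to\mathbb{R}$ and the naive functional tilt fails, but the rank-two abelian subgroup $N=\langle y,a\rangle$ is normal and convex, and densifying it produces the required dense approximants.

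The main obstacle I anticipate is precisely the gluing step in the absence of normality: convex subgroups of a Conradian (as opposed to bi-invariant) ordering need not be normal in $G$, and replacing the ordering on a non-normal convex subgroup generally destroys closure of the positive cone under multiplication. Resolving this will require either producing a normal convex subgroup carrying a dense ordering compatible with $P$ on $F$---using the convex series furnished by Conrad's theorem together with bi-orderability to locate a rank-two or densely orderable jump---or carrying out the interleaving directly from $Q$ and checking multiplicative closure by hand, where the bi-invariance of $Q$ is the essential ingredient. I expect this verification to be the technical heart of the proof.
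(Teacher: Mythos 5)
Your high-level strategy matches the paper's: assume $P$ is discrete with least positive element $a$, locate a convex subgroup containing $\langle a\rangle$ that can be given a dense ordering compatible with $P$ on the finite set $F$, and glue. But there are two problems. First, the obstacle you flag as the ``technical heart'' --- that gluing a new ordering onto a convex subgroup requires normality --- is not an obstacle at all. If $C$ is convex relative to a left-ordering with positive cone $P$, then $P' = (P\setminus(P\cap C))\cup Q$ is a positive cone for \emph{every} $Q\in\mathrm{LO}(C)$; this is the standard convex-subgroup surgery (a short check using convexity shows $P'\cdot P'\subset P'$), and it is exactly what the paper uses, with no normality hypothesis. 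By insisting on a \emph{normal} convex $N$ you have made the problem harder than it is, and in a Conradian (non-bi-invariant) ordering such an $N$ may simply not be available.

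Second, and more seriously, the actual content of the theorem --- the existence of a convex subgroup $N\supseteq\langle a\rangle$ admitting a dense left-ordering that contains the prescribed finite positive subset $F\cap N$ --- is stated as a goal but never established. This is where all the work in the paper lies. The paper splits into two cases according to whether $\langle a\rangle$ is the bottom of a convex jump $(\langle a\rangle, C)$. If it is, then $C/\langle a\rangle$ is abelian and one needs the abelian preliminaries (density of dense orderings in $\mathrm{LO}(A)$ for $A\not\cong\Z$, plus a refinement producing orderings whose minimal convex subgroup is rank one) together with an exclusion of the Klein bottle group to build a dense cone on $C$ containing the required elements. If it is not, one takes the convex jump determined by the smallest $g_i\notin\langle a\rangle$, obtaining a convex $C$ properly containing $\langle a\rangle$; here one needs the separate fact that every bi-orderable group not isomorphic to $\Z$ admits a dense left-ordering, whose proof uses that the least positive element of a discrete bi-ordering is central, that isolators of abelian subgroups in bi-orderable groups are abelian, and that isolated abelian subgroups are relatively convex. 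Your Heisenberg example correctly anticipates the role of rank-two abelian convex pieces, but without these existence arguments the proposal is an outline, not a proof.
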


In fact we show something much stronger, which proves that (in many situations) this containment is proper, see Theorem \ref{main result} and the subsequent examples.  When $G$ is nilpotent, it is known that $\mathrm{LO}(G) = \mathrm{CO}(G)$, and so this yields:

\begin{corollary}
Suppose $G$ is a torsion-free nilpotent group which is not isomorphic to the integers.  Then $\mathrm{LO}(G) = \overline{\D(G)}$.
\end{corollary}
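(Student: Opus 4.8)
The plan is to obtain the Corollary as a short consequence of the Theorem, combined with two standard structural facts about nilpotent groups; the real content is already contained in the Theorem, so the argument should reduce to verifying the Theorem's hypotheses and then pinching $\overline{\D(G)}$ between two inclusions.

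First I would check that $G$ satisfies the hypotheses of the Theorem. A torsion-free nilpotent group is bi-orderable: this is classical, and follows from the observation that the successive quotients of the upper (or lower) central series are torsion-free abelian, so one may lexicographically assemble bi-invariant orders on these central quotients into a bi-ordering of $G$. Since by assumption $G \not\cong \mathbb{Z}$, the Theorem applies and gives $\mathrm{CO}(G) \subset \overline{\D(G)}$.

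Next I would invoke the equality $\mathrm{LO}(G) = \mathrm{CO}(G)$ for nilpotent groups, quoted in the paragraph preceding the Corollary; the point is that nilpotency forces every left-ordering to satisfy the Conradian inequality. Combined with the previous step this yields $\mathrm{LO}(G) = \mathrm{CO}(G) \subset \overline{\D(G)}$. To close the loop, I would note that the reverse containment is automatic: $\D(G) \subseteq \mathrm{LO}(G)$, and $\mathrm{LO}(G)$ is closed in $\mathcal{P}(G)$, so $\overline{\D(G)} \subseteq \mathrm{LO}(G)$. Putting the two inclusions together gives $\mathrm{LO}(G) = \overline{\D(G)}$, as desired.

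I do not anticipate a genuine obstacle here, since once the Theorem is available the Corollary is essentially a formal deduction; the only points requiring care are confirming that both cited facts---bi-orderability and $\mathrm{LO}(G) = \mathrm{CO}(G)$---hold in the full generality stated (an arbitrary torsion-free nilpotent $G$, not merely the finitely generated case), and noting that torsion-freeness is precisely what guarantees that $G$ is orderable in the first place.
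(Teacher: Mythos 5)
Your proposal is correct and matches the paper's own proof: the paper likewise deduces the corollary from the fact that torsion-free nilpotent groups are bi-orderable and that all their left-orderings are Conradian (citing Ault), then applies the theorem that $\mathrm{CO}(G) \subset \overline{\D(G)}$ for bi-orderable $G \not\cong \mathbb{Z}$. The only difference is that you spell out the central-series argument for bi-orderability and the trivial reverse inclusion, which the paper leaves implicit.
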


Leveraging the full strength of Theorem \ref{main result} also allows for an analysis if the space of orderings of a free group.  We show that every left-ordering of a free group is an accumulation point of orderings whose Conradian souls\footnote{See the discussion preceding Theorem \ref{main result} for an explanation of the Conradian soul of an ordering.} are nontrivial, noncyclic subgroups.  From this we conclude:

\begin{theorem}
Suppose that $F$ is a free group having $n \geq 2$ generators or countably infinitely many generators.  Then $\mathrm{LO}(F) = \overline{\D(F)}$.
\end{theorem}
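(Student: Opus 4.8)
The plan is to deduce the theorem from Theorem \ref{main result} together with a single approximation statement. Write $\mathcal S$ for the set of positive cones $P \in \mathrm{LO}(F)$ whose Conradian soul is a nontrivial, noncyclic subgroup. I would prove the two inclusions
\[
\mathrm{LO}(F) \subseteq \overline{\mathcal S} \qquad\text{and}\qquad \mathcal S \subseteq \overline{\D(F)}.
\]
Granting these, since $\overline{\D(F)}$ is closed we obtain $\mathrm{LO}(F)\subseteq\overline{\mathcal S}\subseteq\overline{\overline{\D(F)}}=\overline{\D(F)}$; as the reverse inclusion $\overline{\D(F)}\subseteq\mathrm{LO}(F)$ is automatic, the two sets coincide.

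The inclusion $\mathcal S \subseteq \overline{\D(F)}$ is exactly where Theorem \ref{main result} enters. If $P\in\mathcal S$ then its Conradian soul $C$ is a nontrivial convex subgroup of $F$; by the Nielsen--Schreier theorem $C$ is free, and being noncyclic it has rank at least $2$. Such a group is bi-orderable and not isomorphic to $\Z$, so the hypotheses of Theorem \ref{main result} are satisfied for $C$, and hence $P\in\overline{\D(F)}$. So the entire burden of the proof falls on the first inclusion, $\mathrm{LO}(F)\subseteq\overline{\mathcal S}$.

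To prove that inclusion, fix a positive cone $P$ and a basic neighbourhood of $P$ determined by finitely many elements $g_1,\dots,g_k$; the goal is to produce $Q\in\mathcal S$ with $Q\neq P$ that agrees with $P$ on which of the $g_j$ are positive. The engine is the lexicographic (convex) extension over a normal subgroup: for $C\trianglelefteq F$ and positive cones $P_C\subset C$, $\bar P\subset F/C$, the set $\pi^{-1}(\bar P)\cup P_C$ is again the positive cone of a left-ordering (closure under multiplication requires no conjugation-invariance hypothesis), in which $C$ is convex and the restriction to $C$ equals $P_C$. Choosing $C$ noncyclic free and $P_C$ any bi-ordering of $C$ makes the restriction to $C$ Conradian, so the Conradian soul of $Q:=\pi^{-1}(\bar P)\cup P_C$ contains $C$ and is therefore nontrivial and noncyclic; that is, $Q\in\mathcal S$. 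Provided $C$ avoids $g_1,\dots,g_k$, the signs of the $g_j$ in $Q$ are governed entirely by $\bar P$. Thus everything reduces to exhibiting a noncyclic free $C\trianglelefteq F$ avoiding the $g_j$, together with a left-ordering $\bar P$ of $F/C$ realizing the signs of $\pi(g_1),\dots,\pi(g_k)$ prescribed by $P$.

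When the $g_j$ fail to involve some free generator --- which is automatic in the countably infinite rank case, and available in finite rank whenever the $g_j$ lie in a proper free factor --- this is easy: take $C=\ker r$ for the retraction $r\colon F\to H$ onto the free factor $H$ containing the $g_j$, and set $\bar P = P\cap H$, the restriction of the original ordering. Then $\pi(g_j)=g_j$ and the signs match automatically, $C$ is free of infinite rank, and arranging $Q\neq P$ is trivial since only finitely many signs are constrained. The \emph{main obstacle} is the remaining finite-rank case in which $g_1,\dots,g_k$ involve every generator and lie in no proper free factor, so that no retraction is available. Here one must instead locate a proper quotient $F/C$ with $C$ noncyclic (for instance a free nilpotent quotient $F/\gamma_i$ with all $g_j\notin\gamma_i$, or a surjection onto a free group of smaller rank) carrying a left-ordering that realizes the finite sign pattern inherited from $P$. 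Establishing this \emph{realizability} --- that the finite, consistent sign data of a genuine ordering survives into a suitable quotient with noncyclic kernel --- is the technical heart of the argument, and I expect it to be the step that genuinely exploits the freedom to choose $C$ adapted to $P$.
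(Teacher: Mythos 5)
Your reduction is the same as the paper's: approximate an arbitrary $P$ by orderings whose Conradian soul is a nontrivial, noncyclic (hence free of rank $\geq 2$, hence bi-orderable) convex subgroup, and then invoke Theorem \ref{main result}. That half is fine. The problem is that the inclusion $\mathrm{LO}(F)\subseteq\overline{\mathcal S}$ --- which you correctly identify as carrying the entire burden --- is not actually proved. Your mechanism for landing in $\mathcal S$ is a lexicographic extension over a \emph{normal} subgroup $C\trianglelefteq F$, which forces you to find a proper quotient $F/C$ admitting a left-ordering in which the images $\pi(g_1),\dots,\pi(g_k)$ are simultaneously positive. In the main case (finite rank, the $g_j$ involving every generator and lying in no proper free factor) you explicitly defer this ``realizability'' step, and it is genuinely nontrivial: the fact that $g_1,\dots,g_k$ are simultaneously positive in \emph{some} left-ordering of $F$ gives no control over which sign patterns are achievable in a nilpotent quotient $F/\gamma_i$ or in a smaller-rank free quotient, since orderings of $F$ do not descend to quotients. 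More structurally, your construction only ever produces orderings possessing a proper nontrivial convex \emph{normal} subgroup, and there is no reason such orderings should be dense in $\mathrm{LO}(F_n)$.

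The paper sidesteps this by abandoning normality altogether. Given $P\in\bigcap_{i=1}^m U_{g_i}$, it takes the dynamic realization $\rho:F_n\to\mathrm{Homeo}_+(\mathbb{R})$ of the ordering, and perturbs the action of two generators \emph{outside} the compact interval $[\rho(g^-)(0),\rho(g^+)(0)]$ swept out by the $k$-ball, so that the values $\rho_k(w)(0)$ for $w\in B_k$ (hence the positivity of the $g_i$) are unchanged, while two explicitly constructed elements $h_1,h_2$ generating a noncyclic subgroup come to fix $0$. The preimage of $\mathrm{Stab}(0)$ under $\rho_k$ is then a convex subgroup of the induced ordering containing $\langle h_1,h_2\rangle$; it is nontrivial, not infinite cyclic, and in general not normal. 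Replacing the restriction of the ordering on this convex subgroup by a bi-ordering puts the result in $\mathcal S$ without ever solving a sign-realization problem in a quotient. This dynamical perturbation is the missing idea; without it, or a proof of your realizability claim, the argument does not go through.
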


Our motivation behind these considerations is as follows.  The spaces $\mathrm{LO}(G)$, $\mathrm{CO}(G)$ and $\mathrm{BO}(G)$ are all totally disconnected, compact Hausdorff spaces---in fact, they are metrizable when $G$ is countable.  Therefore when $G$ is countable, each space is homeomorphic to the Cantor set if and only if it is perfect.  As a result there has been a considerable amount of effort in the literature devoted to identifying isolated points and accumulation points in $\mathrm{LO}(G)$ (E.g. \cite{Ito13, Rivas12}).

This effort can be viewed as an initial step towards a more general problem.  Recall that if $X$ is a topological space, $X'$ denotes the set of all accumulation points of $X$.  Set $X^{(0)} = X$, and for each ordinal number $\alpha$ define $X^{(\alpha+1)} = (X^{(\alpha)})'$ and $X^{(\lambda)} = \bigcap_{\alpha<\lambda} X^{(\alpha)}$ if $\lambda$ is a limit ordinal.  Define the \textit{Cantor-Bendixson rank} of $X$ to be the smallest ordinal $\alpha$ such that $X^{(\alpha+1)} = X^{(\alpha)}$, such an $\alpha$ always exists for cardinality reasons.  These notions were used to great success, for example, in showing that every group admits either finitely many or uncountably many left-orderings \cite{Linnell11}.

Viewed through the lens of Cantor-Bendixson ranks and derived subsets, the question of whether or not $\mathrm{LO}(G)$ admits any isolated points becomes a question of whether or not the Cantor-Bendixson rank of $\mathrm{LO}(G)$ is larger than zero.  For example, the spaces of orderings of the braid groups and of various free products with amalgamation admit isolated points and so have Cantor-Bendixson rank larger than zero \cite{Ito13, DD01}; on the other hand the spaces of orderings of free groups, free products with amalgamation and torsion-free abelian groups admit no isolated points and thus have Cantor-Bendixson rank zero \cite{Rivas12, Sikora04}.  For countable groups there is a well-known upper bound on the Cantor-Bendixson rank: since $\mathrm{LO}(G)$ is Polish its Cantor-Bendixson rank is at most a countable ordinal, by the Cantor-Bendixson theorem.  

These matters are connected to the notions of discrete and dense orderings as follows.  One of the main results of \cite{Clay10} is that under mild hypotheses\footnote{Namely that every rank one abelian subgroup of $G$ be isomorphic to the integers.} on the group $G$, we have $\mathrm{D}(G) \subset \mathrm{D}(G)' $.  From this we conclude $\mathrm{D}(G)  \subset \mathrm{LO}(G)^{(\alpha)}$ for all $\alpha$, and thus $\overline{\mathrm{D}(G)} \subset \mathrm{LO}(G)^{(\alpha)}$ for all $\alpha$.   Our study of $\overline{\mathrm{D}(G)}$, therefore, is an attempt to understand the structure of the sets $\mathrm{LO}(G)^{(\alpha)}$ for large $\alpha$ and ultimately determine the Cantor-Bendixson rank of $\mathrm{LO}(G)$ for $G$ in some nontrivial class of groups.  Specifically, the question motivating our work is:

\begin{question}
Let $G$ be a bi-orderable group.  Can $G$ admit a non-isolated point $P \in \mathrm{LO}(G)$ with $P \notin \overline{\mathrm{D}(G)}$?
\end{question}

If the answer to this question is ``no", it would follow that whenever $G$ is bi-orderable the Cantor-Bendixson rank of $\mathrm{LO}(G)$ must be either $1$ or $0$.

\subsection{Organization}  We organize our arguments as follows.  In Section \ref{section:abelian} we prepare some preliminary results concerning torsion-free abelian groups and the distribution of dense and discrete orderings in their spaces of orderings.  In Section \ref{section:biorderable} we apply these results in the study of bi-orderable groups, and discuss several illustrative examples.  Section \ref{free section} deals with the case of free groups.

\section{Discrete orderings of abelian groups}
\label{section:abelian}
When $A$ is a torsion-free abelian group it is known that $\mathrm{LO}(A)$ has no isolated points unless $A$ is rank one abelian.  When $A$ is not rank one abelian but is finitely generated, the set of dense orderings $\mathrm{D}(A) \subset \mathrm{LO}(A)$ is fairly well understood.

\begin{theorem}\cite[Proposition 4.3]{Clay10}
\label{abelian theorem}
Suppose that $A$ is an abelian group. Then $\overline{\mathrm{D}(A)} = \mathrm{LO}(A)$ if and only if $A$ is not isomorphic to the integers.
\end{theorem}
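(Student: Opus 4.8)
The plan is to prove the two implications separately, with the forward direction carrying all the weight.

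The converse is immediate. If $A \cong \mathbb{Z}$ then $\mathrm{LO}(A)$ consists of exactly the two orderings with positive cones $\{n : n>0\}$ and $\{n : n<0\}$, each of which has a least positive element ($1$ or $-1$) and is therefore discrete. Hence $\mathrm{D}(A)=\emptyset$ and $\overline{\mathrm{D}(A)}=\emptyset\neq\mathrm{LO}(A)$, so the equality $\overline{\mathrm{D}(A)}=\mathrm{LO}(A)$ forces $A\not\cong\mathbb{Z}$. For the forward direction, recall that a subbasis for $\mathrm{LO}(A)$ is given by the sets $U_g=\{P\in\mathrm{LO}(A) : g\in P\}$ for $g\in A$, so a basic neighborhood of $P$ is cut out by a finite set $S\subseteq A$ and records the sign of each element of $S$. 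Since $\mathrm{D}(A)\subseteq\mathrm{LO}(A)$ always holds, it suffices to show $\mathrm{D}(A)$ is dense: for every $P\in\mathrm{LO}(A)$ and every finite $S\subseteq A$ I must produce a dense ordering $Q$ agreeing with $P$ on the signs of the elements of $S$. The case where $A$ has torsion (so $\mathrm{LO}(A)=\emptyset$) and the trivial case are degenerate, so I assume $A$ is torsion-free and $A\not\cong\mathbb{Z}$.

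The heart of the matter is the finitely generated case $A\cong\mathbb{Z}^n$ with $n\geq 2$, which I would handle by a separation argument. Discarding the identity and replacing each $g_i\in S$ by its inverse where necessary, I obtain a finite set $V\subseteq P$ such that an ordering contains $V$ if and only if it agrees with $P$ on $S$. I first claim $0\notin\mathrm{conv}(V)\subseteq\mathbb{R}^n$: a convex relation $0=\sum\lambda_i v_i$ may be taken to have rational, hence after clearing denominators nonnegative integer, coefficients not all zero, which exhibits $0$ as a nonempty sum of elements of the additively closed set $P$, contradicting $0\notin P$. The separating hyperplane theorem then produces a linear functional $\ell_0$ with $\ell_0(v)>0$ for all $v\in V$. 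The set of functionals with this property is open, while the functionals failing to be injective on $\mathbb{Z}^n$ form a countable union of proper hyperplanes; since a nonempty open subset of $\mathbb{R}^n$ cannot be covered by countably many hyperplanes, I may choose an injective $\ell$ within this open set. Then $\ell(\mathbb{Z}^n)$ is a rank-$n$, hence non-cyclic, hence dense subgroup of $\mathbb{R}$, so the ordering $Q$ defined by $v>_Q 0\iff\ell(v)>0$ has no least positive element and agrees with $P$ on $S$.

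Finally I would bootstrap to arbitrary $A$. If $A$ has rank $1$ but is non-cyclic, then $A$ embeds as a non-cyclic, hence dense, subgroup of $\mathbb{R}$, so both of its two orderings are already dense and there is nothing to prove. If $A$ has rank $\geq 2$, then given $P$ and $S$ I enlarge $S$ so that $B=\langle V\rangle$ has rank $\geq 2$ and, running the argument above inside $B\cong\mathbb{Z}^m$, obtain an injective $\ell_B:B\to\mathbb{R}$ positive on $V$. Because $\mathbb{R}$ is a divisible (hence injective) abelian group, $\ell_B$ extends to a homomorphism $\ell:A\to\mathbb{R}$, automatically still positive on $V\subseteq B$; I then define $Q$ lexicographically from the sign of $\ell$, breaking ties by a chosen ordering of $\ker\ell$. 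The main obstacle is exactly the control of density of this lexicographic ordering: a short check shows $Q$ fails to be dense precisely when the bottom layer, namely the induced ordering on $\ker\ell$, has a least positive element. Since $\ell(A)\supseteq\ell(B)$ is non-cyclic, this cannot happen when $\ker\ell=0$, a situation I can arrange whenever $A$ embeds in $\mathbb{R}$ (in particular whenever $A$ is countable) by extending $\ell_B$ to an injective functional. In general I must instead choose the extension $\ell$ so that $\ker\ell$ is either trivial or not infinite cyclic and then break ties using a dense ordering of $\ker\ell$, whose existence is guaranteed by the standing hypothesis $A\not\cong\mathbb{Z}$. Verifying that such a choice of extension is always available is the one delicate point of the argument.
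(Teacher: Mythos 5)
First, note that the paper does not actually prove this statement---it is quoted from \cite[Proposition 4.3]{Clay10}---so the only in-paper point of comparison is the ``mirrored'' argument of Proposition \ref{abelian discrete} and the extension technique of Corollary \ref{abelian rank one}. Your treatment of the core finitely generated case is correct and takes a genuinely different route. Where the paper (and its model in \cite{Clay10}) starts from the hyperplane $H$ canonically attached to the given ordering of $\mathbb{Z}^k$ and runs a three-case analysis with convex-subgroup surgery, you instead observe that $0 \notin \mathrm{conv}(V)$ (your clearing-denominators argument for this is right, since the feasible region of a rational linear system has a rational point), apply the separating hyperplane theorem, and then use a Baire-category count to perturb the separating functional to one that is injective on $\mathbb{Z}^n$. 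This directly manufactures an Archimedean, hence dense, ordering in the prescribed neighbourhood, with no case analysis; it is arguably cleaner than the paper's approach, at the cost of being less informative about \emph{which} orderings approximate $P$ (the paper's version also tracks discrete approximants and convex subgroups, which is what it needs later). Your disposal of the converse and of the rank-one case is correct.

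The one genuine gap is the one you flagged yourself: in the passage to general $A$ you extend $\ell_B$ to all of $A$ using injectivity of $\mathbb{R}$ and then must control $\ker\ell$. Your proposed repair is incomplete on two counts: (i) you do not verify that an extension with $\ker\ell$ trivial or not infinite cyclic exists (it does---if $\mathrm{rank}(A) \leq 2^{\aleph_0}$ one can extend a $\mathbb{Q}$-basis to get $\ell$ injective, and if $\mathrm{rank}(A) > 2^{\aleph_0}$ every extension has kernel of rank greater than $2^{\aleph_0}$---but this requires an argument); and (ii) ``a dense ordering of $\ker\ell$ exists because $A \not\cong \mathbb{Z}$'' is not a proof: what you need is that every torsion-free abelian group not isomorphic to $\mathbb{Z}$ admits \emph{some} dense ordering, a weaker cousin of the theorem you are proving, and left unproved it gives the argument a circular flavour. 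Both issues disappear if you extend $\ell_B$ only as far as the isolator $I_A(B)$ rather than to all of $A$: the extension there is unique, automatically injective, and has dense image since it contains $\ell_B(B)$; then order $A$ lexicographically with $I_A(B)$ as the bottom convex subgroup and the torsion-free quotient $A/I_A(B)$ ordered arbitrarily. The least positive element of a lexicographic ordering, if it exists, lies in the bottom convex subgroup, and the Archimedean ordering of $I_A(B)$ has none, so the resulting ordering is dense and still contains $V$. This is precisely the device used in the proof of Corollary \ref{abelian rank one}, and with it your argument closes completely.
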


Question 4.6 of \cite{Clay10} then asks the natural question: What can be said of the set of discrete orderings in $\mathrm{LO}(A)$?  We give a partial answer below by mirroring the proof of \cite[Proposition 4.3]{Clay10}.   We will need this result (specifically Corollary \ref{abelian rank one}) for later.

Our main tool in the proof that follows, which we use repeatedly below and elsewhere in this note, is the procedure of ``changing an ordering on a convex subgroup".  For an ordered group $(G, <)$ a subgroup $C \subset G$ is called \textit{convex relative to $<$} if whenever $f \in G$ and $g, h \in C$ the inequalities $g<f<h$ imply $f \in C$.  In this case, if $P$ is the positive cone of the ordering $<$, then one can check that $P' = P\setminus(P \cap C) \cup Q$ is the positive cone of a left-ordering of $G$ for every $Q \in \mathrm{LO}(C)$.  That is, one can replace the portion of $P$ that lies in $C$ with any other positive cone in $C$.  Note also that an ordering $<$ of a group $G$ is discrete with smallest positive element $g$ if and only if $\langle g \rangle$ is a convex subgroup; a property which we also use often in this paper.

We also recall that if $X$ is either $\mathrm{LO}(G)$, $\mathrm{CO}(G)$ or $\mathrm{BO}(G)$, a subbasis for the topology on $X$ is given by the family of sets $U_g = \{ P \in X \mid g \in P \}$ where $g$ ranges over all nonidentity elements of $G$.

\begin{proposition}
\label{abelian discrete}
Suppose that $k \geq 2$ and that $E \subset \mathrm{LO}(\mathbb{Z}^k)$ is the set of discrete orderings.  Then $\overline{E} =  \mathrm{LO}(\mathbb{Z}^k)$.
\end{proposition}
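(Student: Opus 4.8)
The plan is to unwind the subbasis and reduce the statement to a finite sign-matching problem: by the description of the topology recalled above, it suffices to show that for every $P \in \mathrm{LO}(\mathbb{Z}^k)$ and every finite set $g_1,\dots,g_n \in \mathbb{Z}^k\setminus\{0\}$ there is a \emph{discrete} ordering with positive cone $Q$ for which $g_i \in Q \iff g_i \in P$ for all $i$ (together with the complementary subbasic sets $U_{g}^{c}=U_{-g}$, the $U_{g_i}$ generate a neighbourhood base of $P$). I would first record the characterization that an ordering of $\mathbb{Z}^k$ is discrete precisely when its smallest nonzero convex subgroup has rank one: since subgroups of $\mathbb{Z}^k$ are free abelian and a rank-one ordered abelian group is infinite cyclic, such a subgroup has a least positive element, which is then least in all of $\mathbb{Z}^k$; conversely a convex subgroup of rank $\geq 2$ is Archimedean of rank $\geq 2$, hence order-isomorphic to a dense subgroup of $\mathbb{R}$ and has no least positive element.

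Next I would isolate the smallest nonzero convex subgroup $C$ of $(\mathbb{Z}^k,<_P)$; the convex subgroups form a finite chain, so $C$ exists, $C \cong \mathbb{Z}^r$ for some $1\le r\le k$, and the ordering it inherits is Archimedean (having no proper nonzero convex subgroup). The key leverage is the ``change of ordering on a convex subgroup'' procedure recalled above: replacing $P$ by $P' = \bigl(P\setminus(P\cap C)\bigr)\cup Q$ for any $Q\in\mathrm{LO}(C)$ leaves the sign of every element outside $C$ unchanged, and---because convexity is transitive and the order outside $C$ is untouched---makes the smallest nonzero convex subgroup of $P'$ equal to that of $(C,Q)$. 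Thus $P'$ is discrete as soon as $Q$ is a discrete ordering of $C$, and $P'$ matches $P$ on the $g_i$ as soon as $Q$ matches $P|_C$ on those $g_i$ that lie in $C$. This reduces everything to the Archimedean group $(C,<_{P|_C})\cong\mathbb{Z}^r$: I must produce a discrete ordering of it agreeing with $P|_C$ on finitely many prescribed elements $h_1,\dots,h_s\in C$.

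When $r=1$ the induced ordering is already discrete and there is nothing to do. When $r\ge 2$, H\"older's theorem gives an order-preserving injection $C\hookrightarrow\mathbb{R}$, i.e. a linear functional $\ell$ on $\mathbb{R}\otimes C\cong\mathbb{R}^r$ that is injective on the lattice and satisfies $h\in P|_C \iff \ell(h)>0$. Since each $h_j\ne 0$ we have $\ell(h_j)\ne 0$, so the conditions $\operatorname{sgn}\ell'(h_j)=\operatorname{sgn}\ell(h_j)$ cut out a nonempty open cone of functionals $\ell'$; I would pick inside it a rational functional $\ell'_1$ (integer functionals are dense), complete it to a linearly independent family of rational functionals $\ell'_1,\dots,\ell'_r$, and define $Q$ by the lexicographic rule $h\in Q \iff (\ell'_1(h),\dots,\ell'_r(h))>_{\mathrm{lex}} 0$. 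Linear independence makes this a genuine total order; its smallest nonzero convex subgroup $\{h:\ell'_1(h)=\dots=\ell'_{r-1}(h)=0\}$ is the intersection of the lattice with the kernel of $r-1$ independent rational functionals, hence has rank one, so $Q$ is discrete; and because $\ell'_1$ agrees in sign with $\ell$ on each $h_j$, the lexicographic leading term gives $h_j\in Q \iff h_j\in P|_C$. Assembling $P'$ as above then finishes the proof, in close parallel with Theorem \ref{abelian theorem}.

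I expect the delicate point to be guaranteeing discreteness after the perturbation. A single rational functional yields only a partial order, and, more importantly, collapsing $\ell$ onto one rational direction does not by itself create a rank-one bottom convex subgroup; one must complete a \emph{full} rational flag so that exactly one lattice direction survives at the bottom, while simultaneously keeping the leading functional close enough to $\ell$ to preserve the finitely many signs. Verifying that the surgery interacts correctly with the convex-subgroup chain---so that $C$ remains convex in $P'$, no unexpected small convex subgroup appears, and the signs outside $C$ are untouched---is the remaining bookkeeping that needs care.
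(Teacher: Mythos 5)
Your argument is correct, and it reaches the conclusion by a route that is organized quite differently from the paper's. The paper argues by contradiction with a minimal counterexample $k$, extends the ordering to $\mathbb{Q}^k$, and works at the \emph{top} of the convex-subgroup chain: it locates the separating hyperplane $H$ (whose lattice points form the largest proper convex subgroup) and splits into three cases according to how many of the $g_i$ lie on $H$, recursing into $H\cap\mathbb{Z}^k$ in one case and, in the generic case, perturbing the normal vector of $H$ to a rational one so as to impose a lexicographic discrete ordering. You instead work at the \emph{bottom} of the chain: you pass directly to the smallest nontrivial convex subgroup $C$, note it is Archimedean, apply H\"older to get a real functional, and perturb that functional to the leading term of a full rational flag on $C$, then splice the resulting discrete ordering of $C$ back in by the convex-subgroup surgery. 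The two proofs share their essential mechanisms (convex-subgroup surgery plus rational approximation of a separating functional while preserving finitely many signs), but yours avoids both the induction on $k$ and the case analysis, and it isolates cleanly the characterization that discreteness is equivalent to the bottom convex subgroup having rank one --- which is also exactly the form of the statement the paper needs later in Corollary \ref{abelian rank one}. The one step worth writing out fully is the verification that the convex subgroups of the lexicographic order determined by independent rational functionals $\ell'_1,\dots,\ell'_r$ are precisely the kernels $\{h:\ell'_1(h)=\cdots=\ell'_j(h)=0\}$ (each successive quotient is Archimedean, so nothing else can appear), so that the bottom one really is the rank-one lattice in the line $\ker(\ell'_1)\cap\cdots\cap\ker(\ell'_{r-1})$; with that in place your proof is complete.
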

\begin{proof}
For contradiction, suppose $k>1$ is the smallest $k$ for which the claim fails, and choose a nonempty basic open set $\bigcap_{i=1}^nU_{g_i}$ in $\mathrm{LO}(\mathbb{Z}^k)$, say it contains the positive cone $P$ (here $g_i \in \mathbb{Z}^k$ for $i =1, \ldots, n$). Note that we may assume that none of the $g_i$'s are scalar multiples of one another.  Suppose this basic open set contains no discrete orderings. 

Extend the ordering $<$ defined by $P$ to an ordering of $\mathbb{Q}^k$ by declaring $v_1<v_2$ for $v_1,v_2\in\mathbb{Q}^k$ if $mv_1<mv_2$ whenever $mv_1, mv_2\in\mathbb{Z}^k$. Let $H\subset \mathbb{R}^k$  be the subset of elements $x\in\mathbb{R}^k$ where every Euclidean neighbourhood of $x$ contains both positive and negative elements. One can check that $H$ is a hyperplane which divides $\mathbb{R}^k$ into two components $H_-$ and $H_+$, where $H_-$ contains only negative elements of $\mathbb{Q}^k$ and $H_+$ contains only positive elements of $\mathbb{Q}^k$. Thus the elements of $\{g_1,\ldots, g_n\}$ must lie in either $H_+$ or $H$.  There are three cases to consider.

\textbf{Case 1.} Two or more elements of $\{g_1,\ldots,g_n\}$ lie on $H$. In this case $H\cap\mathbb{Z}^k=\mathbb{Z}^m$ for some $1<m<k$. By assumption, the positive cone $P_H=P\cap (H\cap\mathbb{Z}^k) \subset \mathbb{Z}^m$ is an accumulation point of discrete orderings. Enumerate the $g_i$'s so that $g_i\in P_H$ for $i\leq r$. There exists a positive cone $P'_H\in \cap_{i=1}^rU_{g_i}$ corresonding to a discrete ordering. Note that relative to the ordering defined by $P$, the subgroup $H\cap\mathbb{Z}^k$ is a convex subgroup of $\mathbb{Z}^k$.  Thus $P' = (P \setminus P_H) \cup P'_H$ defines a new positive cone on $\mathbb{Z}^k$, which is the positive cone of a discrete ordering since $P'_H$ defines a discrete ordering on $H\cap\mathbb{Z}^k$.  By construction $P' \in \bigcap_{i=1}^nU_{g_i}$, a contradiction.


\textbf{Case 2.} Exactly one of the $g_i$'s, say $g_1$ lies in $H$. In this case $P$ itself defines a discrete ordering of $\mathbb{Z}^k$ since $P \cap H = \langle g_1 \rangle$ is a convex subgroup of the ordering of $\mathbb{Z}^k$. In particular $\bigcap_{i=1}^nU_{g_i}$ contains a discrete ordering, a contradiction.


\textbf{Case 3.} None of the $g_i$'s are contained in $H$. Suppose $H$ has normal vector $\vec{v}=(v_1,\ldots,v_k).$ Let $\epsilon>0$ and choose $\vec{w}=(w_1,\ldots,w_k)\in\mathbb{Q}^k$ with $\|\vec{v}-\vec{w}\|<\epsilon$. Then choose $(y_1,\ldots,y_k)\in\mathbb{Z}^k$ such that $y_iw_i\in\mathbb{Z}$ for each $i=1,\ldots,k$. Choose $j\in\{1,\ldots,k-1\}$ and let $$m_1=\sum_{i=1}^{j}y_iw_i$$
$$m_2=\sum_{i=j+1}^{k}y_iw_i.	
$$
Then $$\vec{x}=(m_2y_1,\ldots, m_2y_j,-m_1y_{j+1},\ldots,-m_1y_k)\in\mathbb{Z}^k$$ satisfies $\vec{w}\cdot \vec{x}=0$. Thus the hyperplane $H'$ with normal vector $\vec{w}$ satisfies $H'\cap\mathbb{Z}^k = \mathbb{Z}^m$ for some $m> 0$, and since we can choose $\epsilon >0$ as small as we please we may suppose that the $g_i$'s all lie to one side of $H'$. By equipping $\mathbb{Z}^m$ with a discrete ordering, we may lexicographically define a discrete ordering $P'$ on $\mathbb{Z}^k$ with each $g_i$ positive.
\end{proof}

Recall that if $G$ is a group with subgroup $H$, then the isolator of $H$ in $G$ is $$I_G(H)=\{g\in G\mid \exists k\in\mathbb{Z}\ such\ that\ g^k\in H\}.$$ 
A subgroup $H$ of $G$ is called \textit{isolated} in $G$ if $I_G(H) =H$.  In general $I_G(H)$ is a subset of $G$ properly containing $H$ which is not a subgroup unless additional hypotheses are imposed on the group $G$.  For instance if $G$ is abelian (or even nilpotent, see  \cite{Rhemtulla84}) then $I_G(H)$ is a subgroup.  See also Lemma \ref{isolator abelian}.   

\begin{corollary}
\label{abelian rank one}
Suppose that $A$ is a torsion-free abelian group, and for each $P \in \mathrm{LO}(A)$ let $C_P$ denote the smallest nontrivial convex subgroup of the ordering corresponding to $P$.  Then the set
\[ \{ P \in \mathrm{LO}(A) \mid C_P \mbox{ is rank one abelian} \}
\]
is dense in $\mathrm{LO}(A)$.
\end{corollary}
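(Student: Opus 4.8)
The plan is to reduce the statement to Proposition \ref{abelian discrete} by restricting to the finitely generated subgroup generated by the elements that define a basic open set, transporting a discrete ordering onto it, and then re-extending that ordering to $A$ in a way that preserves a rank one subgroup at the bottom of the convex chain. Since the sets $U_g$ form a subbasis, it suffices to show that every nonempty basic open set $U = \bigcap_{i=1}^n U_{g_i}$ in $\mathrm{LO}(A)$ (with each $g_i \neq 1$) contains a positive cone $P'$ whose smallest nontrivial convex subgroup is rank one abelian. Set $B = \langle g_1, \dots, g_n \rangle$; as a finitely generated torsion-free abelian group, $B \cong \mathbb{Z}^k$ for some $k \geq 1$. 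Because $U$ is nonempty, some ordering of $A$ makes all $g_i$ positive, and its restriction to $B$ witnesses that the corresponding basic open set in $\mathrm{LO}(B)$ is nonempty. When $k \geq 2$, Proposition \ref{abelian discrete} supplies a discrete ordering $Q$ of $B$ with each $g_i$ positive; when $k=1$ every ordering of $B \cong \mathbb{Z}$ is already discrete, so such a $Q$ exists in all cases. Let $b_0$ be the least positive element of $(B,Q)$, so that $\langle b_0 \rangle$ is the smallest nontrivial convex subgroup of $(B,Q)$ and is rank one.

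First I would extend $Q$ to the isolator $C = I_A(B)$, which is a subgroup since $A$ is abelian, contains $B$, and has $A/C$ torsion-free. Because every element of $C$ has a nonzero multiple lying in $B$, the ordering $Q$ extends uniquely to an ordering $Q_C$ of $C$ by declaring $c>0$ precisely when $mc \in Q$, where $m>0$ is chosen with $mc \in B$. I would then check that $D := I_C(\langle b_0 \rangle)$ is convex in $(C,Q_C)$: any $c$ with $0 < c < d$ for some $d \in D$ has a positive multiple lying in $B$ and bounded above by a multiple of $b_0$, hence in $\langle b_0 \rangle$ by convexity of $\langle b_0\rangle$ in $(B,Q)$, so $c \in D$. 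Since $D$ has rank one it is archimedean and admits no proper nontrivial convex subgroup, whence $D$ is the smallest nontrivial convex subgroup of $(C,Q_C)$.

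Finally I would extend $Q_C$ to all of $A$ lexicographically: choosing any ordering of the torsion-free quotient $A/C$ and decreeing $C$ to be convex and lying below everything outside it yields a positive cone $P'$ on $A$, obtained through the change-of-ordering-on-a-convex-subgroup procedure recalled before the proposition. Each $g_i$ lies in $B \subset C$ and is $Q_C$-positive, so $g_i \in P'$ and $P' \in U$. Since $C$ is convex in $(A,P')$, the convex subgroups of $A$ contained in $C$ coincide with those of $(C,Q_C)$, and because the convex subgroups of an ordered abelian group form a chain, the smallest nontrivial convex subgroup of $(A,P')$ is $D$, which is rank one; thus $C_{P'} = D$ and $P'$ belongs to the set in question.

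The step I expect to be the main obstacle is the middle one: in passing from $B$ to the possibly non–finitely generated isolator $C$, one must ensure that the rank one subgroup at the bottom of the convex chain survives the extension rather than being absorbed into a larger, possibly archimedean, convex subgroup. The convexity computation for $D$ in $(C,Q_C)$ is precisely what secures this, after which the lexicographic extension to $A$ is routine.
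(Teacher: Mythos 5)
Your proof is correct and takes essentially the same route as the paper's: restrict to the finitely generated subgroup $B=\langle g_1,\dots,g_n\rangle$, find a discrete ordering of $B$ with the $g_i$ positive, extend it uniquely to the isolator $I_A(B)$ (where the isolator of the cyclic bottom jump becomes the rank one smallest nontrivial convex subgroup), and then extend lexicographically to $A$ via the quotient. The only differences are cosmetic: you handle the case $B\cong\mathbb{Z}$ explicitly, and you cite Proposition \ref{abelian discrete} for the existence of the discrete ordering, which is the correct reference (the paper's text points to Theorem \ref{abelian theorem}, apparently a typo).
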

\begin{proof}
Let $P \in \mathrm{LO}(A)$ be given, and suppose $\bigcap_{i=1}^n U_{a_i}$ is a basic open neighbourhood of $P$.  Let $H = \langle a_1, \ldots, a_n \rangle$, and let $Q' = P \cap H$.  By Theorem \ref{abelian theorem} there exists a positive cone $Q \subset H$ with $\{ a_1, \ldots, a_n \} \subset Q$ with $Q \neq Q'$ which corresponds to a discrete ordering of $H$, say with smallest positive element $h \in Q$.

Observe that the positive cone $Q$ extends uniquely to a positive cone $\overline{Q}$ of the subgroup $I_A(H)$, by declaring that $a \in \overline{Q}$ if and only if there exists $k > 0 $ such that $a^k \in Q$.  One can check that if $C \subset H$ is a convex subgroup of the ordering induced by $Q$, then $I_A(C) \subset I_A(H)$ is a convex subgroup of the ordering induced by $\overline{Q}$.  Thus $I_A(\langle h \rangle)$, becomes the smallest nontrivial convex subgroup of $I_A(H)$ relative to the ordering induced by $\overline{Q}$.

Now using the short exact sequence 
\[ 1 \rightarrow I_A(H) \stackrel{i}{\rightarrow} A \stackrel{q}{\rightarrow} A/I_A(H) \rightarrow 1
\]
one can equip $A/I_A(H)$ with an arbitrary positive cone $R$ and set $P' = i(\overline{Q}) \cup q^{-1}(R)$.  By construction $P' \in \bigcap_{i=1}^n U_{a_i} \subset \mathrm{LO}(A)$ and the smallest nontrivial convex subgroup of the corresponding ordering is $I_A(\langle h \rangle)$, which is rank one abelian.
\end{proof}

\section{Dense orderings of bi-orderable groups}
\label{section:biorderable}
In this section we use the property of bi-orderability of $G$ to give sufficient flexibility in the construction of left-orderings of $G$ that we can approximate any Conradian ordering by dense orderings.  In what follows we will use $K$ to denote the Klein bottle group $\langle x, y \mid xyx^{-1} = y^{-1} \rangle$.  

\begin{lemma}
\label{abelian lemma}
Suppose that $G$ is a group that does not contain a copy of the Klein bottle group, and that $P$ is the positive cone of a discrete Conradian ordering of $G$. Suppose that $h$ is the least element of $P$, that $h$ generates the proper normal cyclic subgroup $H \cong \mathbb{Z}$ and that $G/H$ is abelian.  Then $P \in \mathrm{LO}(G)$ is an accumulation point of dense orderings.
\end{lemma}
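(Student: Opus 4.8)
The plan is to reduce the statement to the following concrete task: show that every basic open neighbourhood $\bigcap_{i=1}^n U_{g_i}$ of $P$ (so that $g_1,\dots,g_n\in P$) contains a \emph{dense} left-ordering. Since $P$ is discrete, any such dense ordering is automatically distinct from $P$, which exhibits $P$ as an accumulation point of $\mathrm{D}(G)$. The crucial structural input—and the step where I expect the real work to lie—is to use the absence of a copy of the Klein bottle group to prove that $H$ is \emph{central} in $G$. Indeed, since $H\cong\Z$ is normal, each $g\in G$ acts on $H$ by $ghg^{-1}=h^{\pm1}$; if some $g$ inverted $h$, then because $G$ is torsion-free one checks that $\langle g\rangle\cap H=\{1\}$ (from $g^n=h^m$ one gets $gh^mg^{-1}=h^{-m}=h^m$, forcing $m=0$, hence $g^n=1$ and $n=0$), so $\langle g,h\rangle = H\rtimes\langle g\rangle\cong K$, contradicting the hypothesis. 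Thus every $g$ fixes $h$ and $H$ is central.

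Next I would record that $A:=G/H$ is torsion-free abelian: it is abelian by hypothesis, and torsion-free because $H$ is convex (discreteness of $P$ with least element $h$ is equivalent to $\langle h\rangle$ being a convex subgroup), so the quotient carries an induced left-ordering $\bar P$. Applying Corollary \ref{abelian rank one} to $(A,\bar P)$, within the neighbourhood of $\bar P$ consisting of those orderings that keep every $\bar g_i$ (for $g_i\notin H$) positive, I can choose an ordering $\bar P'$ whose smallest nontrivial convex subgroup $C_0$ is rank one abelian. Lifting $\bar P'$ through $1\to H\to G\xrightarrow{q} A\to 1$ with $H$ ordered in the standard way produces an ordering $P''$ of $G$ in which $H$ is convex, every $g_i$ is positive (those outside $H$ via $\bar P'$, those inside via the standard order on $H$), and $\tilde C:=q^{-1}(C_0)$ is a convex subgroup.

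The payoff of centrality is that $\tilde C$ is abelian: it is a central extension of the rank-one abelian group $C_0$ by $H\cong\Z$, and the commutator pairing of such an extension factors through $\wedge^2 C_0=0$. Hence $\tilde C$ is torsion-free abelian of rank $2$, in particular not isomorphic to $\Z$. By Theorem \ref{abelian theorem}, $\overline{\mathrm{D}(\tilde C)}=\mathrm{LO}(\tilde C)$, so the nonempty open set of orderings of $\tilde C$ keeping every $g_i\in\tilde C$ positive—it contains $P''$ restricted to $\tilde C$—must contain a dense ordering $Q$. Replacing the ordering on the convex subgroup $\tilde C$ by $Q$ yields a positive cone $P'=\bigl(P''\setminus(P''\cap\tilde C)\bigr)\cup Q$. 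This $P'$ lies in $\bigcap_{i=1}^n U_{g_i}$, since signs of elements outside $\tilde C$ are unchanged and each $g_i\in\tilde C$ lies in $Q$; and $P'$ is dense, because $\tilde C$ is the smallest convex subgroup and $(\tilde C,Q)$ has no least positive element, so $(G,P')$ has none either.

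I expect the main obstacle to be precisely the centrality step, as that is the only place the Klein bottle hypothesis is consumed and it is what forces $\tilde C$ to be abelian; once $H$ is known central, the argument is an assembly of standard moves (the quotient ordering, the rank-one reduction of Corollary \ref{abelian rank one}, and the abelian density result Theorem \ref{abelian theorem}) glued together by the convex-subgroup surgery recalled before Proposition \ref{abelian discrete}.
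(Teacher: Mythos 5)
Your proposal is correct and follows essentially the same route as the paper's proof: centrality of $H$ extracted from the Klein bottle hypothesis, Corollary \ref{abelian rank one} applied to $G/H$ to produce a rank-one convex subgroup whose preimage is a rank-two abelian convex subgroup of $G$, and then Theorem \ref{abelian theorem} together with convex-subgroup surgery. The only differences are cosmetic: you avoid the paper's case split on the rank of $G/H$, and you supply the details (the embedding of $K$, the abelianness of $q^{-1}(C_0)$) that the paper leaves as ``one can check.''
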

\begin{proof}
First observe that for every $g \in G$ we have $[g,h]=1$.  To see this, note that since $H=\langle h\rangle$ is normal in $G$, every element $g\in G$ satisfies $ghg^{-1} = h^{\pm1}$. In particular, if $ghg^{-1}=h^{-1}$ one can check that $G$ would contain $K$, which we assume is not possible.  
From this, it follows that if $G/H$ is rank one abelian, then $G$ itself would be abelian as $H$ is cyclic.  Thus $G$ is a torsion-free rank two abelian group, and so the result follows from Theorem \ref{abelian theorem}.

On the other hand, suppose $G/H$ is torsion free abelian of rank larger than two.  Let $g_1, \ldots, g_n$ be finitely many elements of $P$.  We will produce a positive cone $Q$ corresponding to a dense ordering which contains $g_1, \ldots, g_n$.

Since there is a short exact sequence with $H$ convex, we have 
\[ 1 \rightarrow H \stackrel{i}{\rightarrow} G \stackrel{q}{\rightarrow} G/H \rightarrow 1
\]
and $P$ is constructed as $P = \{ h^k \}_{k > 0} \cup q^{-1}(P')$ for some positive cone $P' \subset G/H$.  Suppose that $g_1, \ldots, g_n$ are enumerated so that $g_1, \ldots, g_r$ are powers of $h$ and $g_{r+1}, \ldots, g_n$ lie in $q^{-1}(P')$, meaning $q(g_i) \in P'$ for $r < i \leq n$.

By Proposition \ref{abelian rank one}, we can choose a positive cone $Q'$ of $G/H$ containing $q(g_i)$ for $r < i \leq n$ that produces an ordering with rank one abelian convex subgroup $C \subset G/H$.  The subgroup $q^{-1}(C)$ is abelian of rank two and convex in the ordering whose positive cone is $R=  \{ h^k \}_{k > 0} \cup q^{-1}(Q')$.  The positive cone $R \cap q^{-1}(C)$ contains the elements $g_1, \ldots, g_s$ for some $s \geq r$.  By Theorem \ref{abelian theorem} there is a cone $R' \subset q^{-1}(C)$ containing $g_1, \ldots , g_s$ that is different from $R$, and which defines a dense ordering of $q^{-1}(C)$.  Now set $S = R' \cup q^{-1}(Q' \setminus (C \cap Q'))$, which is the positive cone of a dense ordering of $G$ that contains $g_1, \ldots, g_n$ by construction.
\end{proof}

We need two lemmas concerning isolators of abelian subgroups before moving on to our main theorem.

\begin{lemma}
\label{isolator abelian}
Suppose that $G$ is a bi-orderable group and $A$ is an abelian subgroup.  Then $I_G(A)$ is an abelian subgroup.
\end{lemma}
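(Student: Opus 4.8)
The plan is to exploit the fact that bi-orderable groups have \emph{unique roots}: if $a^n = b^n$ for some $n > 0$, then $a = b$. I would record this first, since it is the engine of the whole argument. Fixing a bi-ordering of $G$ with positive cone $P$, a short induction shows that $a < b$ implies $a^n < b^n$ for all $n \geq 1$: from $a^n < b^n$ and $a < b$ one gets $a^{n+1} = a^n a < b^n a < b^n b = b^{n+1}$, using right-multiplication by $a$, left-multiplication by $b^n$, and transitivity (all of which are legitimate in a bi-ordering). Consequently $a^n = b^n$ forces $a = b$, since otherwise one of $a < b$ or $b < a$ would yield a strict inequality between the $n$-th powers.

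With unique roots available, the key step is a conjugation trick that establishes commutativity. First I would show that every $g \in I_G(A)$ commutes with every $a \in A$. Choosing $m > 0$ with $g^m \in A$, abelianness of $A$ gives $a g^m a^{-1} = g^m$, so
$$ (a g a^{-1})^m = a g^m a^{-1} = g^m. $$
By unique roots this forces $a g a^{-1} = g$, i.e. $ag = ga$. The same trick then upgrades this to the statement that any two elements $g, h \in I_G(A)$ commute: with $g^m \in A$, the previous paragraph shows $h$ commutes with $g^m$, so $(h g h^{-1})^m = h g^m h^{-1} = g^m$, whence $h g h^{-1} = g$ and thus $gh = hg$.

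Finally I would verify that $I_G(A)$ is an abelian subgroup. Closure under inverses is immediate, since $g^m \in A$ gives $(g^{-1})^m = (g^m)^{-1} \in A$. For closure under products, given $g, h \in I_G(A)$ with $g^m, h^n \in A$, the commutativity of $g$ and $h$ established above yields
$$ (gh)^{mn} = g^{mn} h^{mn} = (g^m)^n (h^n)^m \in A, $$
so $gh \in I_G(A)$. Hence $I_G(A)$ is a subgroup, and since any two of its elements commute it is abelian.

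The only genuine content here is the unique-root property; everything else is the conjugation trick together with routine bookkeeping. I expect the hypothesis of bi-orderability to be exactly what does the work, and it cannot be dropped: as the remark preceding the lemma observes, $I_G(A)$ need not even be a subgroup in general, and uniqueness of roots is precisely what rescues both subgroup-ness and commutativity simultaneously. The ``hard part'' is therefore conceptual rather than computational—recognizing that unique roots is the correct tool and deploying it via conjugation—so there are no delicate estimates to control.
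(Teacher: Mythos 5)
Your proof is correct and follows essentially the same route as the paper: the paper's argument rests on the fact that in a bi-orderable group $[g^n,h^m]=1$ implies $[g,h]=1$ (which it invokes directly), and then checks closure via $(gh)^{k\ell}=a^{\ell}b^{k}$, exactly as you do. The only difference is that you prove the commutation fact from scratch via unique roots and the conjugation trick, where the paper simply cites it as a known consequence of bi-orderability.
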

\begin{proof}
First observe that all elements of $I_G(A)$ commute, because if $[g^n, h^m]=1$ for some $g, h \in G$ then $[g, h]=1$ by bi-orderability.  It then follows that $I_G(A)$ is a subgroup, since $g^k =a \in A$ and $h^{\ell} =b \in A$ implies $(gh)^{k \ell} = a^{\ell}b^k \in A$, and closure under taking inverses is obvious.
\end{proof}

\begin{lemma}\cite[Lemma 3.2]{Clay12}
\label{abelian convex}
Suppose that $A$ is an isolated abelian subgroup of a bi-orderable group $G$.  Then $A$ is relatively convex in $G$.
\end{lemma}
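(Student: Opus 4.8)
The goal is to exhibit a single left-ordering of $G$ with respect to which $A$ is convex. I would first record the standard reformulation: a subgroup $A \leq G$ is convex for some left-ordering if and only if the left $G$-set $G/A$ carries a $G$-invariant total order $\prec$, meaning $gA \prec hA$ implies $fgA \prec fhA$ for all $f \in G$. Indeed, from such a $\prec$ the set $\mathcal{P} = \{ g \in G \mid A \prec gA \}$ satisfies $\mathcal{P} \cdot \mathcal{P} \subseteq \mathcal{P}$, $G = \mathcal{P} \sqcup \mathcal{P}^{-1} \sqcup A$, and $A \mathcal{P} A = \mathcal{P}$; picking any $P_A \in \mathrm{LO}(A)$ (which exists, as $A$ is torsion-free abelian), one checks that $\mathcal{P} \sqcup P_A$ is the positive cone of a left-ordering of $G$ in which $A$ is convex. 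Thus everything reduces to producing the invariant order $\prec$ on $G/A$.

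Next I would use bi-orderability to cut the problem down. Fix a bi-ordering $<_b$ of $G$ and let $C$ denote its $<_b$-convex closure of $A$, i.e. the smallest $<_b$-convex subgroup containing $A$. A short computation with the bi-invariance of $<_b$ shows that for $g \notin C$ the sign of $g$ is unchanged upon multiplying by elements of $A$ on either side, so the $<_b$-sign is a well-defined, product-closed, inverse-reversing function on the double cosets $AgA$ with $g \notin C$; this orients, $G$-invariantly, every coset lying outside $C$. Since $C$ is $<_b$-convex in $G$, the ``changing an ordering on a convex subgroup'' technique recalled above lets me splice any left-ordering of $C$ into $<_b$. Hence it suffices to left-order $C$ so that $A$ is convex, and I may therefore assume from the outset that $A$ is cofinal in $(G,<_b)$, i.e. that the only $<_b$-convex subgroup containing $A$ is $G$ itself.

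In this cofinal situation the plan is to induct along the chain of $<_b$-convex subgroups of $G$. Passing to a maximal archimedean quotient, one obtains an order-preserving homomorphism $\psi \colon G \to \mathbb{R}$ with convex kernel $C^\ast$, and cofinality of $A$ forces $A \not\subseteq C^\ast$, so $\psi(A) \neq 0$. The subgroup $A \cap C^\ast$ is again abelian, and it is isolated in $C^\ast$ because $g \in C^\ast$ with $g^k \in A$ gives $g \in I_G(A) = A$; Lemma \ref{isolator abelian} guarantees that the abelian hypothesis is preserved as one descends. I would then attempt to order $G/A$ by first comparing $\psi$-values and resolving ties by an inductively constructed invariant order coming from $A \cap C^\ast$ inside $C^\ast$, in the spirit of the lexicographic surgeries used in Proposition \ref{abelian discrete} and Corollary \ref{abelian rank one}.

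The step I expect to be the genuine obstacle is exactly this cofinal, non-normal base case. When $A$ fails to be normal, $G/A$ is only a $G$-set and not a quotient group, so no bi-ordering of a quotient can simply be pulled back; worse, the $<_b$-sign that worked outside $C$ breaks down inside it, since a single coset $gA$ then contains both $<_b$-positive and $<_b$-negative elements (this already occurs for $A = \langle x \rangle \leq F_2$ in the Magnus ordering). A naive lexicographic comparison via $\psi$ also fails on its own, because between $1$ and an element $a \in A$ with $\psi(a) > 0$ there sit many elements of $C^\ast$ outside $A$. The role of the isolation hypothesis is precisely to repair this: since no nontrivial power of $g \notin A$ lies in $A$, the cosets $\{ g^n A \}$ can be coherently oriented, and it is this coherence---rather than any single character or bi-ordering---that must be leveraged to assemble the $G$-invariant order $\prec$ and finish the proof.
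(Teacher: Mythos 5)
This lemma is quoted in the paper from \cite[Lemma 3.2]{Clay12}; the paper itself supplies no proof, so the comparison can only be against the cited source. Your proposal, however, is not a complete proof on its own terms: after two legitimate reductions (the reformulation of relative convexity as a $G$-invariant total order on $G/A$, and the splice into the $<_b$-convex closure $C$ of $A$, both of which are standard and correctly executed), you arrive at the cofinal case and then explicitly describe it as ``the genuine obstacle'' that ``must be leveraged \ldots to finish the proof.'' That cofinal case \emph{is} the lemma. The isolation hypothesis---the only hypothesis doing real work in the statement---is never actually used to construct anything; you only observe that it \emph{ought} to make the cosets $g^nA$ coherently orientable. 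Identifying where the difficulty lives is not the same as resolving it, so the argument has a genuine gap at its core.

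Two subsidiary problems with the intended completion are worth flagging. First, the ``maximal archimedean quotient'' $\psi\colon G\to\mathbb{R}$ with convex kernel $C^\ast$ need not exist: a bi-ordered group can be an increasing union of proper $<_b$-convex subgroups, in which case there is no convex jump $(C^\ast,G)$ at the top, and your proposed descent has no first step. Second, even when such jumps exist, ``inducting along the chain of convex subgroups'' requires a transfinite scheme with limit stages, and you have not explained how the partially built invariant order on $G/A$ passes through a limit ordinal, nor how ties in $\psi$-value are broken $G$-invariantly (not merely $C^\ast$-invariantly) when $A$ is not normal. The actual argument in \cite{Clay12} does not proceed by this archimedean stratification; it uses a maximality argument (via Zorn's lemma) on partial $G$-invariant orders of $G/A$, where bi-orderability and isolation of $A$ are used to show that a maximal such partial order must already be total. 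To repair your write-up you would need either to carry out that maximality argument or to supply the missing construction in the cofinal case explicitly.
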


\begin{proposition}
\label{dense biorderable}
Every bi-orderable group which is not isomorphic to the integers admits a dense left-ordering.
\end{proposition}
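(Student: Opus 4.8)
The plan is to reduce the statement to the abelian case handled by Theorem~\ref{abelian theorem}. Concretely, I would locate a torsion-free abelian subgroup $A \leq G$ that is not infinite cyclic and is convex with respect to some left-ordering of $G$; changing the ordering on $A$ to a dense one would then force the whole ordering of $G$ to be dense. To set this up: if $G$ is abelian it is torsion-free abelian and not isomorphic to $\mathbb{Z}$, so Theorem~\ref{abelian theorem} already supplies a dense ordering and we are done. So assume $G$ is non-abelian and fix a bi-ordering $<$; if $<$ is dense we are finished, so assume it is discrete with least positive element $h$. Since $<$ is bi-invariant, conjugation by any $g$ is an order-automorphism of $(G,<)$ and therefore fixes the unique least positive element, so $ghg^{-1}=h$; thus $h$ is central and of infinite order.

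Next I would build a non-cyclic abelian subgroup. By Lemma~\ref{isolator abelian}, $I_G(\langle h\rangle)$ is abelian, so it cannot be all of the non-abelian group $G$; pick $g\in G\setminus I_G(\langle h\rangle)$. Because $h$ is central, $g$ and $h$ commute, and because no nonzero power of $g$ lies in $\langle h\rangle$, the subgroup $\langle g,h\rangle$ is isomorphic to $\mathbb{Z}^2$. Its isolator $A=I_G(\langle g,h\rangle)$ is abelian by Lemma~\ref{isolator abelian}, is isolated by construction, and contains $\mathbb{Z}^2$, so $A\not\cong\mathbb{Z}$. By Lemma~\ref{abelian convex}, $A$ is relatively convex, that is, convex in some left-ordering $\prec$ of $G$.

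To conclude, since $A$ is torsion-free abelian and not isomorphic to $\mathbb{Z}$, Theorem~\ref{abelian theorem} supplies a dense positive cone $Q$ of $A$. Replacing the part of the $\prec$-positive cone lying in $A$ by $Q$ yields, via the convex-subgroup surgery described in Section~\ref{section:abelian}, a positive cone of a left-ordering of $G$ in which $A$ is still convex. I would verify that this ordering is dense as follows: a least positive element cannot lie outside $A$, since convexity forces every positive element outside $A$ to exceed all of $A$, contradicting minimality; and it cannot lie inside $A$, since $Q$ has no least positive element.

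I expect the main obstacle to be the middle step, namely guaranteeing a convex abelian subgroup strictly larger than $\mathbb{Z}$. The centrality of the least positive element is what unlocks this: for non-abelian $G$ it upgrades the cyclic $\langle h\rangle$ to a rank-two abelian subgroup, after which Lemmas~\ref{isolator abelian} and~\ref{abelian convex} provide an isolated, hence relatively convex, abelian overgroup that is not cyclic. The details warranting the most care are that $I_G(\langle h\rangle)=G$ is truly ruled out when $G$ is non-abelian, and that the surgery eliminates the least element of $G$ globally and not merely within $A$.
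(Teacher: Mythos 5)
Your proof is correct and follows essentially the same route as the paper: show the least positive element $h$ of a bi-ordering is central, enlarge $\langle h\rangle$ to a rank-two abelian subgroup, pass to its isolator, invoke Lemmas~\ref{isolator abelian} and~\ref{abelian convex} to get a relatively convex abelian subgroup not isomorphic to $\mathbb{Z}$, and finish with Theorem~\ref{abelian theorem}. Your choice of $g\notin I_G(\langle h\rangle)$ (rather than merely ``not a power of $h$'') and your explicit verification that the surgered ordering has no least positive element are minor refinements that, if anything, make the argument slightly more airtight than the paper's.
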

\begin{proof}
Let $(G,<)$ be a bi-ordered group.  If $<$ is dense, we are done.  Otherwise let $g \in G$ be the least positive element of $<$, and observe that $g$ is central:  Since $1<g$ we know that $1<hgh^{-1}$ for all $h \in G$.  If $h$ does not commute with $g$, this forces $g< hgh^{-1}$ since $g$ is the least positive element.  But then conjugation yields $h^{-1}gh <g$, a contradiction.

Thus $g$ is central, and since $G$ is not infinite cyclic there exists $h \in G$ that is not a power of $g$.   Then $\langle g, h \rangle$ is a rank two abelian subgroup of $G$, by Lemma \ref{isolator abelian} $I_G(\langle g, h \rangle)$ is an isolated abelian subgroup; one can check it also has rank two.   By Lemma \ref{abelian convex} $I_G(\langle g, h \rangle)$ is relatively convex.  Every rank two abelian group admits a dense ordering, so we are done.
\end{proof}

Note that bi-orderability is essential in the previous proposition.  The finitely generated Tararin groups
\[ T_n = \langle x_1, \ldots, x_n \mid x_ix_{i-1}x_i^{-1} = x_{i-1}^{-1} \mbox{ for } i=2,\ldots n \rangle
\]
satisfy $|\mathrm{LO}(T_n)|=2^n$ and all the orderings are discrete.  It is also possible to construct groups having uncountably many orderings, all of them discrete, such as the so-called infinite Tararin group $\langle x_i, i \in \mathbb{N} \mid x_ix_{i-1}x_i^{-1} = x_{i-1}^{-1} \mbox{ for } i \in \mathbb{N}_{>1} \rangle$.  None of these groups are bi-orderable, as each contains an element that is conjugate to its own inverse.

\begin{proposition}
\label{finitely generated case}
Suppose that $G$ is a bi-orderable group that is not isomorphic to the integers and that $P$ is the positive cone of a Conradian ordering of $G$.  Then $P \in \overline{\D(G)}$.
\end{proposition}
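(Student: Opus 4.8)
The plan is to fix a basic open neighbourhood $\bigcap_{i=1}^{n} U_{g_i}$ of $P$, with each $g_i \in P$, and to produce inside it the positive cone of a dense ordering; since such neighbourhoods form a base at $P$, this yields $P \in \overline{\D(G)}$. If $P$ is already dense there is nothing to prove, so I assume $P$ is discrete with least positive element $h$. Then $H := \langle h \rangle \cong \Z$ is the smallest nontrivial convex subgroup of $(G,<)$: it is convex because $<$ is discrete, and it is smallest because any nontrivial convex subgroup must contain $h$, hence $H$. A short argument also shows $H$ is isolated in $G$: if $a>1$ satisfied $a \notin H$ but $a^k = h^m \in H$, then (since $a \notin H$ we have $k\ge 2$, as $k=1$ gives $a=h^m\in H$) convexity would force $a$ to exceed every element of $H$, so $a > h^m = a^k \geq a$, a contradiction.

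The heart of the argument is to locate a convex subgroup $C$ with $H \subsetneq C$ to which one of two ``un-discretizing'' engines applies. Either $C$ is abelian and not isomorphic to $\Z$, in which case Theorem \ref{abelian theorem} (together with Corollary \ref{abelian rank one}) gives $P \cap C \in \overline{\D(C)}$; or $H$ is normal in $C$ with $C/H$ abelian, in which case---$G$, and hence $C$, being bi-orderable and therefore containing no copy of the Klein bottle group---Lemma \ref{abelian lemma} applied to $C$ in place of $G$ shows that $P\cap C$ is an accumulation point of dense orderings of $C$, so again $P\cap C \in \overline{\D(C)}$. This is exactly where bi-orderability is used to supply $C$: by Lemmas \ref{isolator abelian} and \ref{abelian convex} the isolator of an abelian subgroup is abelian and, being isolated, is relatively convex, which is the mechanism (as in the proof of Proposition \ref{dense biorderable}) for producing an abelian convex subgroup strictly above $H$; when no such abelian subgroup is available one instead passes to the convex jump immediately above $H$, whose quotient is order-embeddable in $\mathbb{R}$, and so abelian, by the standard structure theory of Conradian jumps.

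With $C$ in hand the rest is routine. Using $P\cap C \in \overline{\D(C)}$, I choose a dense positive cone $Q \in \D(C)$ containing every $g_i$ that lies in $C$. Since $C$ is convex relative to $<$, the set $P' = (P \setminus (P \cap C)) \cup Q$ is again a positive cone of $G$, obtained by the ``changing an ordering on a convex subgroup'' procedure. Each $g_i \notin C$ lies in $P \setminus C \subseteq P'$ and so remains positive, while each $g_i \in C$ lies in $Q \subseteq P'$; hence $P' \in \bigcap_{i=1}^n U_{g_i}$. Finally $P'$ is dense: any least positive element of $P'$ would have to lie in $C$ (a positive element outside the convex subgroup $C$ exceeds every element of $Q$, so cannot be least), but $Q$ has no least positive element. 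Thus $P' \in \D(G) \cap \bigcap_{i=1}^n U_{g_i}$, as required.

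I expect the identification of $C$ in the second paragraph to be the main obstacle, and it is the only place where the global structure of $(G,<)$, rather than manipulation near the bottom of the ordering, is needed; it is also precisely where the bi-orderability hypothesis cannot be dropped. The delicate point is the case in which there is no jump immediately above $H$ and $G$ is non-abelian: one must either rule this out (for instance it cannot occur for free groups, whose abelian subgroups are all cyclic and which consequently admit no discrete Conradian orderings) or else exhibit the required convex subgroup directly, and it is here that the interplay between the isolator lemmas and the Archimedean structure of Conradian jumps does the real work.
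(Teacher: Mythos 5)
Your overall strategy---replace $P\cap C$ on a suitable convex subgroup $C\supsetneq\langle h\rangle$ by a dense positive cone containing the relevant $g_i$'s---is the paper's strategy, and your treatment of the case where a convex jump $(\langle h\rangle, C)$ exists matches the paper: $C/\langle h\rangle$ is abelian by the Archimedean structure of Conradian jumps, so Lemma \ref{abelian lemma} applies to $C$, and the surgery in your third paragraph is fine. But the case you yourself flag as delicate---no convex jump immediately above $\langle h\rangle$---is a genuine gap, and the mechanism you propose for it does not work. Lemma \ref{abelian convex} only says that an isolated abelian subgroup is \emph{relatively} convex, i.e.\ convex in \emph{some} left-ordering of $G$; it produces no convex subgroup of the \emph{given} ordering $<$, which is what your replacement procedure requires. (In the paper that lemma is used only inside Proposition \ref{dense biorderable}, to show that a bi-orderable group not isomorphic to $\mathbb{Z}$ admits \emph{some} dense ordering.)

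The missing idea is that you do not need a convex subgroup $C$ with $P\cap C\in\overline{\D(C)}$; you only need one admitting \emph{some} dense positive cone that contains the right $g_i$'s. Enumerate the $g_i$ so that $g_1,\dots,g_r$ are powers of $h$, and let $g_{r+1}$ be the $<$-smallest of the remaining ones. Then $g_{r+1}$ determines a convex jump $(C,D)$ of the given ordering; by convexity and minimality every $g_j$ with $j>r$ lies outside $C$, while $\langle h\rangle\subsetneq C$ is proper (if $C=\langle h\rangle$ then $(\langle h\rangle, D)$ would be a convex jump, contradicting the case assumption). Since $C$ is bi-orderable and not infinite cyclic, Proposition \ref{dense biorderable} equips it with a dense positive cone $Q$, which may be chosen with $h\in Q$ and hence containing $g_1,\dots,g_r$; no compatibility with $P\cap C$ beyond that is needed. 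Then $P'=(P\setminus(P\cap C))\cup Q$ lies in $\bigcap_{i=1}^n U_{g_i}$ and is dense. Without this step your argument covers only the case where $\langle h\rangle$ has an immediate convex successor.
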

\begin{proof}
Suppose that $P$ is the positive cone of a discrete Conradian ordering, and that $P \in \cap_{i=1}^nU_{g_i}$.  Let $h>1$ denote the least element of $P$.

First suppose that there exists a convex subgroup $C$ such that $(\langle h \rangle, C)$ is a convex jump.  Assume that $g_1, \ldots, g_n$ are enumerated so that $g_1, \ldots, g_r \in C$ and $g_{r+1}, \ldots, g_n \notin C$.  By Lemma \ref{abelian lemma}, there exists a positive cone $Q \in \mathrm{LO}(C)$ such that $Q \neq P \cap C$ and $g_1, \ldots, g_r \in Q$.  Then $P' = (P \setminus (P \cap C)) \cup Q$ contains $g_1,\ldots,g_n$, is different from $P$ and is the positive cone of a dense ordering.

On the other hand, suppose that there is no convex subgroup $C$ such that $(\langle h \rangle, C)$ is a convex jump.  Suppose further that $g_1, \ldots, g_n$ are enumerated so that $g_1, \ldots, g_r$ are powers of $h$ and $g_{r+1}, \ldots, g_n$ are not; suppose also that $g_{r+1}$ is the smallest element which is not in $\langle h \rangle$.   Then $g_{r+1}$ determines a convex jump $(C, D)$, note that $g_j \notin C$ for all $j > r$ and that the containment $\langle h \rangle \subset C$ is proper.  To complete the proof, it suffices to observe that $C$ can be equipped with a dense ordering by Proposition \ref{dense biorderable}.  Thus we can choose a positive cone $Q \subset C$ with $h \in Q$ and set $P' = (P \setminus (P \cap C)) \cup Q$ as before.
\end{proof}

\begin{corollary}
If $G$ is a torsion-free nilpotent group that is not isomorphic to the integers, then $\mathrm{LO}(G) = \overline{\mathrm{D}(G)}$.
\end{corollary}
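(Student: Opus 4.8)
The plan is to deduce the corollary from Proposition~\ref{finitely generated case} together with two standard structural facts about nilpotent groups, so that the statement becomes a formal assembly of ingredients rather than a fresh argument. One inclusion, $\overline{\D(G)} \subseteq \mathrm{LO}(G)$, is immediate: we have $\D(G) \subseteq \mathrm{LO}(G)$, and $\mathrm{LO}(G)$ is closed (indeed compact) in $\mathcal{P}(G)$, so it contains the closure of any of its subsets. All the content therefore lies in establishing the reverse inclusion $\mathrm{LO}(G) \subseteq \overline{\D(G)}$.

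First I would record that a torsion-free nilpotent group is bi-orderable. This is classical: filtering $G$ by its upper central series yields a chain of normal subgroups whose successive quotients are torsion-free abelian, and ordering each quotient and combining these orderings lexicographically produces a bi-ordering of $G$. In particular, since by hypothesis $G \not\cong \Z$, Proposition~\ref{finitely generated case} is applicable to $G$.

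Next I would invoke the known fact, already noted in the introduction, that every left-ordering of a nilpotent group is Conradian, so that $\mathrm{LO}(G) = \mathrm{CO}(G)$. Granting this, Proposition~\ref{finitely generated case} asserts that every positive cone $P$ of a Conradian ordering of $G$ lies in $\overline{\D(G)}$, that is, $\mathrm{CO}(G) \subseteq \overline{\D(G)}$. Chaining the inclusions then gives
\[
\mathrm{LO}(G) = \mathrm{CO}(G) \subseteq \overline{\D(G)} \subseteq \mathrm{LO}(G),
\]
whence equality.

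I do not expect a genuine obstacle here, since the real work has already been carried out in Proposition~\ref{finitely generated case} and in the abelian results of Section~\ref{section:abelian} that feed into it. The only points requiring care are the two cited structural ingredients---bi-orderability of torsion-free nilpotent groups and the identity $\mathrm{LO}(G) = \mathrm{CO}(G)$ for nilpotent groups---together with the observation that the hypothesis $G \not\cong \Z$ is exactly what licenses the application of Proposition~\ref{finitely generated case}. Both the torsion-free condition (needed even for left-orderability) and the exclusion of $\Z$ are thus used in an essential way.
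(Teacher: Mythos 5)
Your argument is correct and is essentially identical to the paper's proof: both deduce the result by combining the bi-orderability of torsion-free nilpotent groups with the fact that all their left-orderings are Conradian, and then applying Proposition~\ref{finitely generated case}. The only difference is that you sketch the classical upper-central-series argument for bi-orderability, whereas the paper simply cites the literature.
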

\begin{proof}
Every torsion-free nilpotent group is bi-orderable, and all left-orderings of every torsion-free nilpotent group are Conradian \cite{Ault72}.
\end{proof}

We can extend the previous proposition so that it applies to certain orderings of non-bi-orderable groups.  Indeed, as Example \ref{example braids} shows, the group $G$ need not even be locally indicable for our generalized result to apply.  For the statement of our theorem below, recall that the \textit{Conradian soul} of an ordering $<$ of a group $G$ is the largest convex subgroup $C \subset G$ such that the restriction of $<$ to $C$ is Conradian.

\begin{theorem}
\label{main result}
Suppose that $P$ is the positive cone of a left-ordering of a bi-orderable group $G$. If the Conradian soul of the ordering corresponding to $P$ is bi-orderable, nontrivial, and not isomorphic to $\mathbb{Z}$, then $P \in \overline{\mathrm{D}(G)}$.
\end{theorem}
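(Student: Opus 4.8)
The plan is to reduce the theorem to Proposition \ref{finitely generated case} by restricting attention to the Conradian soul $C$ of the ordering and then transporting a dense ordering of $C$ back up to $G$ via the convex-subgroup replacement procedure described in Section \ref{section:abelian}. To prove $P \in \overline{\mathrm{D}(G)}$ it suffices to show that every basic open neighbourhood $\bigcap_{i=1}^n U_{g_i}$ of $P$ meets $\mathrm{D}(G)$, so I would fix such a neighbourhood with each $g_i \in P$ and produce a dense ordering inside it.

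First I would record that $C$, being the Conradian soul, is by definition a convex subgroup on which the restriction of $<$ is Conradian; hence $P \cap C$ is the positive cone of a Conradian ordering of $C$. By hypothesis $C$ is bi-orderable and not isomorphic to $\mathbb{Z}$, so Proposition \ref{finitely generated case} applies to the group $C$ and yields $P \cap C \in \overline{\mathrm{D}(C)}$. Next I would split the generators of the neighbourhood according to membership in $C$: enumerate so that $g_1, \ldots, g_r \in C$ and $g_{r+1}, \ldots, g_n \notin C$. Since $P \cap C$ lies in $\overline{\mathrm{D}(C)}$, the neighbourhood $\bigcap_{i=1}^r U_{g_i}$ of $P \cap C$ inside $\mathrm{LO}(C)$ contains a dense positive cone $Q \in \mathrm{D}(C)$ with $g_1, \ldots, g_r \in Q$.

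Using convexity of $C$, I would then set $P' = (P \setminus (P \cap C)) \cup Q$, which is again the positive cone of a left-ordering of $G$. It is immediate that $P' \in \bigcap_{i=1}^n U_{g_i}$: for $i \leq r$ the element $g_i$ lies in $Q$, and for $i > r$ it lies in $P \setminus (P \cap C)$ because it is in $P$ but not in $C$. Note also that $P' \cap C = Q$, since $Q \subset C$ and $P \setminus (P \cap C)$ is disjoint from $C$.

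The crux of the argument, and the step I expect to require the most care, is verifying that $P'$ is dense. Suppose instead that $P'$ had a least positive element $g$; as noted in the text this is equivalent to $\langle g \rangle$ being convex, and in particular $g$ lies below every positive element of $G$. Because $C$ is nontrivial, $Q$ contains some element $h$, and $g \leq h$. If $g = h$ then $g \in C$, while if $g < h$ then $1 < g < h$ with $1, h \in C$ and $C$ convex forces $g \in C$; either way $g \in C$. Then $g \in P' \cap C = Q$ and $g$ would be the least element of $Q$, contradicting the density of $Q$. Hence $P'$ has no least positive element and is dense, so $P' \in \mathrm{D}(G) \cap \bigcap_{i=1}^n U_{g_i}$ as required. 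The nontriviality of $C$ is essential here, as it supplies the positive element of $C$ against which the hypothetical least element is compared.
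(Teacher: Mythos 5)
Your proposal is correct and follows essentially the same route as the paper: split the subbasic generators according to membership in the Conradian soul $C$, invoke Proposition \ref{finitely generated case} to find a dense positive cone $Q$ of $C$ containing $g_1,\ldots,g_r$, and replace $P\cap C$ by $Q$ using convexity of $C$. Your explicit verification that $P'$ is dense (a hypothetical least positive element would be forced into $C$ by convexity and nontriviality, contradicting density of $Q$) is a detail the paper leaves implicit, but it is exactly the intended justification.
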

\begin{proof}
With $P$ and $G$ as in the statement of the theorem, suppose that $P \in \bigcap_{i=1}^nU_{g_i}$ where $g_1, \ldots, g_n \in G$. Let $C$ denote the Conradian soul of the ordering corresponding to $P$, and suppose that the $g_i$'s are enumerated so that $g_1, \ldots, g_r \in C$ and $g_i \notin C$ for $i= r+1, \ldots, n$.  By Proposition \ref{finitely generated case} there is a positive cone $Q \subset C$ containing $g_1, \ldots g_r$ whose corresponding ordering of $C$ is dense.  Then $P' = P \setminus(P \cap C) \cup Q$ is the positive cone of a dense ordering of $G$, and $P' \in  \bigcap_{i=1}^nU_{g_i}$ by construction.
\end{proof}

With this generalization it is straightforward to construct orderings of non-biorderable groups that are accumulation points of dense orderings.

\begin{example}
\label{example braids}
Recall that $$B_n = \langle \sigma_1, \ldots, \sigma_{n-1} \mid \sigma_i \sigma_j \sigma_i = \sigma_j \sigma_i \sigma_j \mbox{ if $|i-j| =1$ and } \sigma_i \sigma_j = \sigma_j \sigma_i \mbox{ otherwise} \rangle.$$
The positive cone $P_D$ of the Dehornoy ordering $<_D$ of $B_n$ is defined as follows.   Given a word $w$ in the generators $\sigma_i$, we say that $w$ is $i$-positive if it contains no occurrences of $\sigma_j$ for $j <i$ and all occurrences of $\sigma_i$ (of which there must be at least one) occur with positive exponent.  A braid $\beta \in B_n$ lies in $P_D$ if and only if it admits a representative word $w$ that is $i$-positive for some $i$.

Fix $n >4$ and consider $B_n$.  The convex subgroups of $<_{D}$ are precisely the subgroups $\langle \sigma_r, \ldots, \sigma_{n-1}\rangle$ with $r \geq 1$ \cite{DDRW08}.  In particular, $\langle \sigma_{n-2}, \sigma_{n-1} \rangle \cong B_3$ is a proper convex subgroup.  Equip this copy of $B_3$ with any left-ordering whose Conradian soul is contained in $[B_3, B_3] \cong F_2$ and is not infinite cyclic.  Extend this ordering to $B_n$ using the Dehornoy ordering outside of $\langle \sigma_{n-2}, \sigma_{n-1} \rangle$.  By Theorem \ref{main result} the resulting ordering is an accumulation point of dense orderings of $B_n$.  However, $B_n$ itself is not bi-orderable---in fact, not even Conradian left-orderable since $[B_n, B_n]$ is finitely generated and perfect for $n \geq 5$. \qed
\end{example}

\begin{remark}
A family of left-orderings of $B_n$ of particular interest are the \textit{Nielsen-Thurston orderings}.  These are the orderings which arise from considering the action of $B_n$, thought of as a mapping class group, on the boundary of the universal cover of the $n$-punctured disk equipped with a hyperbolic metric (see \cite[Chapter XIII]{DDRW08} for more details).  Such orderings are either of finite or infinite type, depending on how a certain geodesic which describes the ordering cuts up the $n$-punctured disk.  The authors of \cite{NW11} show that the Nielsen-Thurston orderings of infinite type are dense, while those of finite type have Conradian soul isomorphic to $\mathbb{Z}^k$ for $k \geq 1$.  When $k >1$ such orderings are obviously an accumulation point of dense orderings, but when $k=1$ the picture is not so clear (though it is known that these orderings are not isolated points).  It may be of some interest to determine whether or not the Nielsen-Thurston orderings with Conradian soul isomorphic to $\mathbb{Z}$ lie in $\overline{\mathrm{D}(B_n)}$, as this would imply that \textit{all} Nielsen-Thurston orderings lie in $\overline{\mathrm{D}(B_n)}$.
\end{remark}

\section{Free groups}
\label{free section}

Let $F_n$ denote the free group on generators $\{ x_1, \dots, x_n \}$.  In this section we show that $\mathrm{LO}(F_n) = \overline{\mathrm{D}(F_n)}$, which will follow as a corollary of the following theorem:

\begin{theorem}
\label{soul theorem}
Let $n \geq 2$ and suppose that $P \in \bigcap_{i=1}^m U_{g_i} \subset \mathrm{LO}(F_n)$ for some collection of nonidentity elements $g_1, \ldots, g_m \in F_n$.  Then there exists $Q \in \bigcap_{i=1}^m U_{g_i}$ and a subgroup $C \subset F_n$ with $g_i \notin F_n$ for all $i$, satisfying:
\begin{enumerate}
\item $C$ is convex relative to the ordering of $F_n$ determined by $Q$;
\item $C$ is nontrivial and not isomorphic to $\mathbb{Z}$.
\end{enumerate}
\end{theorem}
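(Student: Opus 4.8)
The plan is to reduce the theorem to a single construction: produce a left-ordering $Q$ of $F_n$ in which every $g_i$ is positive and which has a convex subgroup $C$ with $g_i \notin C$ for all $i$ and $C$ free of rank at least $2$. It is worth recording why this is the right target. Since $C$ is a nontrivial, noncyclic subgroup of a free group it is itself free of rank $\geq 2$, hence bi-orderable and not isomorphic to $\mathbb{Z}$, so by Proposition \ref{dense biorderable} it carries a dense ordering. One checks that $C$ lies entirely below each $g_i$: if $g_i > 1$ and $g_i \notin C$, then $c \geq g_i$ for some $c \in C$ would give $1 < g_i < c$ with $1, c \in C$, forcing $g_i \in C$ by convexity. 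Consequently, replacing the ordering of $Q$ on the convex subgroup $C$ by a dense one leaves every $g_i$ positive and makes the whole ordering dense (a least positive element would lie in the minimal convex subgroup, hence in $C$, contradicting density of $C$). This is exactly the mechanism behind Theorem \ref{main result}, and it yields $P \in \overline{\mathrm{D}(F_n)}$.

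To build $Q$ and $C$ I would look for a \emph{relatively convex} free subgroup $C \leq F_n$ of rank $\geq 2$ disjoint from $\{g_1, \ldots, g_m\}$, together with an $F_n$-invariant ordering of the coset space $F_n/C$ in which each coset $g_iC$ is positive; combining this coset ordering lexicographically with any ordering of $C$ then produces the desired $Q$. The cleanest instance is when the abelianizations $\bar g_1, \ldots, \bar g_m$ lie in an open half-space: choosing a homomorphism $\rho : F_n \to \mathbb{Z}$ with $\rho(g_i) > 0$ for all $i$, the subgroup $C = \ker \rho$ is the kernel of a retraction, hence relatively convex, and is free of infinite rank because $n \geq 2$; ordering $F_n/C \cong \mathbb{Z}$ in the standard way makes every $g_i$ positive and places them above $C$.

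The substance lies in the complementary case, where no homomorphism to $\mathbb{Z}$ separates the $g_i$ from the identity---most strikingly when some $g_i$ lies in the commutator subgroup. Here I would build $Q$ as an iterated lexicographic ordering along a descending chain of relatively convex subgroups $F_n = N_0 \supsetneq N_1 \supsetneq \cdots \supsetneq N_k = C$, passing at each step to a left-orderable quotient $N_{j}/N_{j+1}$ whose ordering is chosen to detect, with positive sign, those $g_i$ that first become nontrivial at that level, and arranging the chain to terminate at a fat $C$ (a deep term of the lower central series, or a deeper kernel) missing all $g_i$. The reason a naive choice of the $N_j$ from the lower central series fails is the sign bookkeeping: since $P$ need not be bi-invariant, a word and a conjugate of its inverse can both be $P$-positive, yet their leading abelian images are then antipodal and cannot be made jointly positive in any abelian quotient. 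Reconciling this forces at least one quotient in the chain to be ordered non-bi-invariantly, with the relevant elements reversing the sign of some cosets; equivalently, in the dynamical realization the generators must act on the ordered coset space $F_n/C$ with mixed dynamics, pushing some cosets up and others down.

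The main obstacle I anticipate is precisely the simultaneous realization of all three demands on $C$: relative convexity, avoidance of the finite set $\{g_i\}$, and the existence of an invariant ordering of $F_n/C$ placing every $g_iC$ on the positive side. The first two are soft and can be met by taking $C$ deep in a suitable normal filtration, using that free factors and kernels of retractions of free groups are relatively convex and, since $n \geq 2$, can be taken of rank $\geq 2$. The third is the delicate point, because it must fold the possibly antipodal abelian data of the $g_i$ into one invariant order. My expectation is that the consistency required is exactly what $P$ certifies---no product of the $g_i$ equals the identity---so that a compactness argument on the nonempty space of invariant orderings of $F_n/C$, together with a choice of $C$ keeping all positive products of the $g_i$ outside $C$, will close the gap.
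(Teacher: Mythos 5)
Your reduction is sound as far as it goes: producing a left-ordering $Q$ with every $g_i$ positive that admits a nontrivial, noncyclic convex subgroup $C$ missing the $g_i$ is exactly what the theorem asks for, and your ``easy case'' (a homomorphism $F_n \to \mathbb{Z}$ positive on every $g_i$, with $C$ its kernel) is correct. But the complementary case is the entire content of the theorem, and there you identify the obstruction without overcoming it. If, say, $g_1 = w$ and $g_2 = vw^{-1}v^{-1}$ both lie in $P$ (perfectly possible for a non-bi-invariant ordering), then in any bi-orderable quotient of $F_n$ their images are either both trivial or of opposite sign, so no chain built from lower central series terms or other normal subgroups with bi-orderable quotients can succeed---as you yourself observe. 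At that point the argument becomes an expectation: you assert that some relatively convex, noncyclic $C$ exists whose coset space carries an $F_n$-invariant ordering placing every $g_iC$ on the positive side, and that ``a compactness argument will close the gap.'' Neither half is justified. You produce no candidate $C$ in the hard case, and the sufficiency of ``the semigroup generated by the $g_i$ misses $C$'' is itself unproved: for non-normal $C$ the relevant object is a union of cosets of $C$ closed under multiplication and complementary (up to inversion) to $C$, and the consistency condition one must verify involves $C$ and not merely the $g_i$. The proof is therefore incomplete precisely where the difficulty lies.

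For contrast, the paper resolves this dynamically rather than through normal filtrations. It takes the dynamic realization $\rho$ of $P$ and, for $k$ with $g_1, \ldots, g_m \in B_k$, modifies the action of two generators only outside the interval $[\rho(g^-)(0), \rho(g^+)(0)]$ swept out by the $k$-ball, so that $\rho_k(w)(0) = \rho(w)(0)$ for all $w \in B_k$ (preserving positivity of each $g_i$) while two explicit elements $h_1 = (bg^+)^{-1}a^2g^+$ and $h_2 = (abg^+)^{-1}b^2g^+$ generating a noncyclic subgroup come to fix $0$. The preimage of $\mathrm{Stab}(0)$ under $\rho_k$, which is convex for the ordering induced from an enumeration of the rationals beginning at $0$, is then the desired $C$: it contains $\langle h_1, h_2 \rangle$ and misses every $g_i$ since $\rho_k(g_i)(0) > 0$. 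This is exactly the ``mixed dynamics on the coset space'' you predicted would be necessary, but it is achieved by an explicit perturbation, not by compactness; to salvage your approach you would need to supply a construction of comparable concreteness.
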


Some of the details of the proof are a special case of computations done in \cite{Rivas12}, and so are omitted here for clarity of exposition.

\begin{proof}
Corresponding to the positive cone $P$ there is a left-ordering $<$ of $F_n$ with $1<g_i$ for $i=1, \ldots, m$.  Let $\rho : F_n \rightarrow \mathrm{Homeo}_+(\mathbb{R})$ denote the dynamic realization of $<$, which is a representation satisfying $1<g$ if and only if $\rho(g)(0)>0$ for all $g \in F_n$.  For the rest of this proof, let $B_k$ denote the $k$-ball in $F_n$ relative to the generating set $\{x_1, \ldots, x_n\}$.

We will show how to construct, for each $k \geq 1$, a representation $\rho_k : F_n \rightarrow \mathrm{Homeo}_+(\mathbb{R})$ satisfying:
\begin{enumerate}
\item $\rho_k(w)(0) = \rho(w)(0)$ for all $w \in B_k$, and
\item there exist nonidentity elements $h_1, h_2 \in F_n$ such that $\langle h_1, h_2 \rangle$ is not cyclic and $\rho_k(h_i)(0) = 0$ for $i=1,2$.
\end{enumerate}
Having constructed such representations, the theorem follows:  First choose an enumeration of the rationals $\{r_0, r_1, r_2, \dots \}$ with $r_0 =0$ and define a left-ordering $\prec$ of  $\mathrm{Homeo}_+(\mathbb{R})$ according to the rule $f_1\prec f_2$ if and only if $f_1(r_i) < f_2(r_i)$, where $r_i$ is the first rational in the enumeration $\{r_0, r_1, r_2, \dots \}$ with $f_1(r_i) \neq f_2(r_i)$.  The stabilizer of $0$, $Stab(0)$, is a convex subgroup in this left-ordering.  Now choose $k \geq 1$ such that $g_1, \ldots, g_m \in B_k$.  Then with the representation $\rho_k$ constructed as above, consider the short exact sequence $1 \rightarrow \ker(\rho_k) \rightarrow F_n \rightarrow \rho_k(F_n) \rightarrow 1$, and lexicographically order $F_n$ using the restriction of $\prec$ to $\rho_k(F_n)$ and whatever ordering one pleases on $\ker(\rho_k)$.  Then $C = \langle h_1, h_2 \rangle$ is not cyclic and is contained in $\rho_k^{-1}(Stab(0))$, which is a convex subgroup relative to the resulting ordering of $F_n$.  Moreover if we use $Q$ to denote the positive cone of this ordering of $F_n$, then by our choice of $\rho_k$ we have $\rho_k(g_i)(0) = \rho(g_i)(0) >0$ for all $i =1, \ldots, m$ and hence $Q \in \bigcap_{i=1}^m U_{g_i}$.

Thus we fix $k \geq 1$ and focus on constructing $\rho_k$ as above.  Let $g^+ = \max B_k $ and $g^- = \min B_k$, where the maximum and minimum are taken relative to the ordering $<$ of $F_n$ restricted to $B_k$.   Since the dynamic realization $\rho$ satisfies $\rho(h)(0) >0$ if and only if $h>1$, the assignment $h \mapsto \rho(h)(0)$ is order-preserving.  We conclude that $\rho(w)(0) \in [\rho(g^-)(0), \rho(g^+)(0)]$ for all $w \in B_k$.  From this it follows, by induction on the length of $w$, that if $\rho_k$ satisfies $\rho_k(x_i)(y) = \rho(x_i)(y)$ for $i = 1, \ldots, n$ and for all $y \in  [\rho(g^-)(0), \rho(g^+)(0)]$, then $\rho_k(w)(0) = \rho(w)(0)$ for all $w \in B_k$ (this is a special case of Lemma 1.9 in \cite{Rivas12}).

Now for each $j = 1, \ldots, m$ choose $\epsilon_j = \pm 1$ such that $x_j^{\epsilon_j} g^+ >g^+$, and choose $j_0$ such that $x_{j_0}^{\epsilon_{j_0}} g^+ = \min \{x_1^{\epsilon_1} g^+, \ldots , x_n^{\epsilon_n} g^+\}$.  To simplify notation, set $a = x_{j_0}^{\epsilon_{j_0}}$.  Since $n \geq 2$ we may choose $\ell \neq j_0$, and set $b = x_{\ell}^{\epsilon_{\ell}}$. 

For ease of notation in the arguments below, in place of $\rho(h)(x)$ we simply write $h(x)$ whenever $h \in F_2$ and $x \in \mathbb{R}$.  Define order-preserving homeomorphisms $f_1, f_2 : \mathbb{R} \rightarrow \mathbb{R}$ as follows:
\[
    f_1(x)= 
\begin{cases}
   a(x) \text{ if } x \leq g^+(0);&\\
    \left( \cfrac{bg^+(0) - ag^+(0)}{ag^+(0) - g^+(0)} \right)(x-ag^+(0))+bg^+(0)              & \text{otherwise}.
\end{cases}
\]
Then noting that $f_1 (bg^+(0)) > bg^+(0)$, set:
\[
    f_2(x)= 
\begin{cases}
    b(x)  \text{ if } x \leq g^+(0);&\\
    \left( \cfrac{f_1(bg^+(0)) - bg^+(0)}{bg^+(0) - g^+(0)} \right)(x-bg^+(0))+f_1(bg^+(0))              & \text{otherwise}.
\end{cases}
\]
See Figures \ref{figf1} and \ref{figf2} for graphical explanations of these functions; note that $g^+(0) < ag^+(0) < bg^+(0)$ and $g^+(0) < bg^+(0) < f_1(bg^+(0))$ follow from our choices of $j_0$, $\epsilon_{j_0}$, and $\epsilon_{\ell}$.

\begin{figure}[h!]
\begin{center}
\setlength{\unitlength}{1cm}
\thicklines
\begin{picture}(12,5)
\put(1,0){\colorbox{gray!20}{\makebox(4.8,2.9){$a(x)$ here}}}
\put(5,0){\line(0,1){5}}
\put(1,1){\line(1,0){8}}
\multiput(6,0)(0, 0.125){24}{\line(0,1){0.05}}
\multiput(1,3)(0.125, 0){40}{\line(1,0){0.05}}
\put(6, 0.9){\line(0,1){0.2}}
\put(8, 0.9){\line(0,1){0.2}}
\put(4.9, 3){\line(1,0){0.2}}
\put(4.9, 4){\line(1,0){0.2}}
\put(6, 3){\line(2,1){3}}
\put(6, 3){\circle*{0.1}}
\put(6.25, 2.75){$(g^+(0), ag^+(0))$}
\put(8, 4){\circle*{0.1}}
\put(8.25, 3.75){$(ag^+(0), bg^+(0))$}
\end{picture}
\caption{The function $f_1(x)$.}
\label{figf1}
\end{center}
\end{figure}
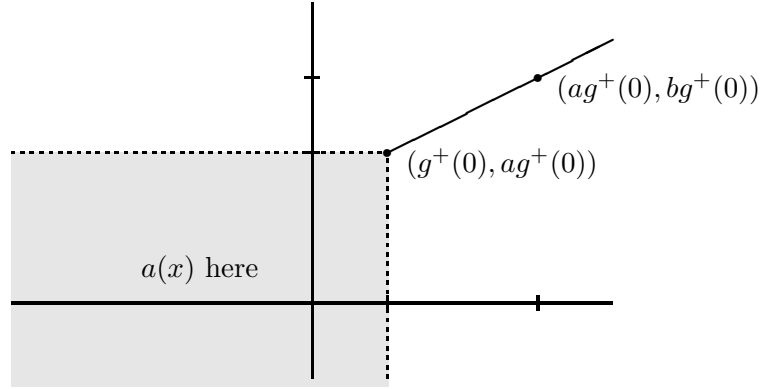

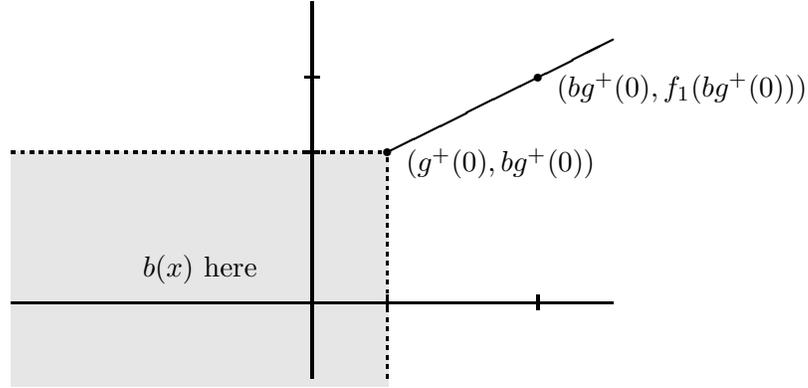
\begin{figure}[h!]
\setlength{\unitlength}{1cm}
\thicklines
\begin{picture}(12,5)
\put(1,0){\colorbox{gray!20}{\makebox(4.8,2.9){$b(x)$ here}}}
\put(5,0){\line(0,1){5}}
\put(1,1){\line(1,0){8}}
\multiput(6,0)(0, 0.125){24}{\line(0,1){0.05}}
\multiput(1,3)(0.125, 0){40}{\line(1,0){0.05}}
\put(6, 0.9){\line(0,1){0.2}}
\put(8, 0.9){\line(0,1){0.2}}
\put(4.9, 3){\line(1,0){0.2}}
\put(4.9, 4){\line(1,0){0.2}}
\put(6, 3){\line(2,1){3}}
\put(6, 3){\circle*{0.1}}
\put(6.25, 2.75){$(g^+(0), bg^+(0))$}
\put(8, 4){\circle*{0.1}}
\put(8.25, 3.75){$(bg^+(0), f_1(bg^+(0)))$}
\end{picture}
\caption{The function $f_2(x)$.}
\label{figf2}
\end{figure}

Define $\rho_k : F_n \rightarrow \mathrm{Homeo}_+(\mathbb{R})$ as follows.  For $i \notin \{ j_0, \ell \}$ set  $\rho_k(x_i) = x_i$, and set $\rho_k(a) = f_1$ and $\rho_k(b) = f_2$.  Next set $h_1 = (bg^+)^{-1}a^2g^+$ and $h_2 = (abg^+)^{-1}b^2g^+$.  Observe that $bg^+$ and $ag^+$ are reduced words, since the exponents $\epsilon_{j_0}$ and $\epsilon_{\ell}$ are chosen so that $bg^+(0), ag^+(0) \notin [g^-(0), g^+(0)]$.  Since $j_0 \neq \ell$ it follows that $h_1$ and $h_2$ are reduced words in the generators $\{ x_1, \ldots, x_n \}$, and we conclude that $h_1, h_2$ do not represent the identity.  Moreover there are no integers $s, t$ such that $h_1^s = h_2^t$, because the commutator $[h_1, h_2]$ is not the identity, so $\langle h_1, h_2 \rangle$ is not cyclic.

Lastly, by using the facts: (1) $\rho_k(g^+)^{\pm1}(0) = (g^+)^{\pm1}(0)$ and (2) $\rho_k(a)(x) = f_1(x)$ and $\rho_k(b)(x) = f_2(x)$ for all $x \in \mathbb{R}$, one computes that $\rho_k(h_1)(0) = 0$ and $\rho_k(h_2)(0)=0$.  This completes the proof.
\end{proof}

\begin{corollary}
\label{free corollary}
If $n \geq 2$ then $\mathrm{LO}(F_n) = \overline{\mathrm{D}(F_n)}$.
\end{corollary}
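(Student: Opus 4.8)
The plan is to deduce the statement from Theorem~\ref{soul theorem} together with Theorem~\ref{main result}, exploiting the fact that every subgroup of a free group is again free and hence bi-orderable. The inclusion $\overline{\mathrm{D}(F_n)} \subseteq \mathrm{LO}(F_n)$ is immediate, since $\mathrm{LO}(F_n)$ is closed, so the whole content is the reverse inclusion $\mathrm{LO}(F_n) \subseteq \overline{\mathrm{D}(F_n)}$. To prove it I would fix an arbitrary $P \in \mathrm{LO}(F_n)$ and an arbitrary basic open neighbourhood $\bigcap_{i=1}^m U_{g_i}$ of $P$, and show that this neighbourhood meets $\mathrm{D}(F_n)$; since such basic sets form a neighbourhood basis at $P$, this is enough to place $P$ in $\overline{\mathrm{D}(F_n)}$.

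First I would apply Theorem~\ref{soul theorem} to the pair consisting of $P$ and $\{g_1,\dots,g_m\}$ to obtain a positive cone $Q \in \bigcap_{i=1}^m U_{g_i}$ together with a convex subgroup $C \subset F_n$ satisfying $g_i \notin C$ for all $i$, with $C$ nontrivial and not isomorphic to $\mathbb{Z}$. Being a subgroup of a free group, $C$ is itself free, and since it is neither trivial nor infinite cyclic it is free of rank at least two; in particular $C$ is bi-orderable.

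The key step is to upgrade $Q$ so that its Conradian soul meets the hypotheses of Theorem~\ref{main result}. Using the ``changing an ordering on a convex subgroup'' procedure, I would replace the restriction of $Q$ to $C$ by the positive cone of a bi-ordering of $C$, obtaining a positive cone $Q' = \bigl(Q \setminus (Q \cap C)\bigr) \cup Q_C$. Because every $g_i$ lies outside $C$, this modification leaves each $g_i$ positive, so $Q' \in \bigcap_{i=1}^m U_{g_i}$. Since $C$ remains convex and the restriction of $Q'$ to $C$ is now a bi-ordering, hence Conradian, the subgroup $C$ is contained in the Conradian soul of $Q'$. That soul is therefore a subgroup of $F_n$ containing a free group of rank two, so it is nontrivial, not isomorphic to $\mathbb{Z}$, and—being free—bi-orderable. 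Theorem~\ref{main result} then yields $Q' \in \overline{\mathrm{D}(F_n)}$; as $Q'$ lies in the open set $\bigcap_{i=1}^m U_{g_i}$ and in the closure of $\mathrm{D}(F_n)$, that open set must intersect $\mathrm{D}(F_n)$, completing the argument.

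I expect the only delicate point to be the third paragraph: verifying that after swapping in a bi-ordering on $C$ the subgroup $C$ genuinely lands inside the Conradian soul of $Q'$. This rests on two facts worth recording carefully—that convexity of $C$ is preserved under the ``change on a convex subgroup'' construction, and that any convex subgroup whose induced ordering is Conradian is contained in the (largest such) Conradian soul. Everything else is routine: the reduction to basic neighbourhoods, the observation that subgroups of free groups are free and therefore bi-orderable (so that the bi-orderability hypothesis of Theorem~\ref{main result} holds automatically here), and the topological remark that a point of $\overline{\mathrm{D}(F_n)}$ lying in an open set forces that set to meet $\mathrm{D}(F_n)$. I note that one could instead bypass Theorem~\ref{main result} entirely: since $C$ is bi-orderable and not $\mathbb{Z}$, Proposition~\ref{dense biorderable} lets one install a \emph{dense} ordering on $C$, and because convex subgroups form a chain the resulting $Q'$ is itself dense, landing directly in $\mathrm{D}(F_n)$. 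I would still present the soul-based route, as it matches the framing used elsewhere in the paper.
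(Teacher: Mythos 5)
Your proposal is correct and follows essentially the same route as the paper: apply Theorem \ref{soul theorem} to get $Q$ and the convex subgroup $C$, swap in a bi-ordering on $C$ so that the Conradian soul of the modified cone contains $C$, and invoke Theorem \ref{main result}. Your closing remark about bypassing Theorem \ref{main result} via Proposition \ref{dense biorderable} is also valid, but the argument you actually present matches the paper's.
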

\begin{proof}
Suppose that $P$ is the positive cone of a left-ordering of $F_n$, and that  $P \in \bigcap_{i=1}^m U_{g_i}$ for some $g_1, \ldots, g_m \in F_n$.  Choose $Q \in \bigcap_{i=1}^m U_{g_i}$ with corresponding subgroup $C \subset F_n$ as in the conclusion of Theorem \ref{soul theorem}.   Choose a bi-ordering of $C$ with positive cone $R$, and set $Q' = Q \setminus (Q \cap C) \cup R$.  Then $Q'\in \bigcap_{i=1}^m U_{g_i}$, and $Q'$ corresponds to a left-ordering of $F_n$ whose Conradian soul contains $C$.  In particular, Theorem \ref{main result} implies that $Q' \in   \overline{\mathrm{D}(F_n)}$.  

It follows that the positive cone $P$ is an accumulation point of elements of $ \overline{\mathrm{D}(F_n)}$, so $P \in  \overline{\mathrm{D}(F_n)}$.
\end{proof}

While the previous proof can be modified to handle the case of $F_{\infty}$ (the free group with countably infinitely many generators), the space $\mathrm{LO}(F_{\infty})$ can also be analyzed directly as below.

\begin{example}
Let $F_{\infty}$ denote the free group on countably many generators $\{x_i\}_{i \in \mathbb{N}}$.  Then $\mathrm{LO}(F_{\infty})$ is homeomorphic to the Cantor set:  If $P \in \bigcap_{i=1}^n U_{g_i} \subset \mathrm{LO}(F_{\infty})$, choose $k$ large enough that $x_i$ for $i \geq k$ does not occur in any reduced word representing $g_1, \ldots, g_n$.  Then the automorphism $\phi:F_{\infty} \rightarrow F_{\infty}$ defined by $\phi(x_i)=x_i$ for $i \neq k$ and $\phi(x_k) = x_k^{-1}$ yields a positive cone $\phi(P) \neq P$ that contains $g_1, \ldots, g_n$.

We can in fact approximate such a positive cone $P$ by dense orderings of $\mathrm{LO}(F_{\infty})$.  With $k$ as above, consider the map $h: F_{\infty} \rightarrow \langle x_1, \ldots x_{k-1} \rangle \cong F_{k-1}$ given by $h(x_i) = x_i$ for $i < k$ and $h(x_i) = 1$ for $i>k$. Equip $F_{k-1}$ with the positive cone $P \cap F_{k-1}$, and the subgroup $\langle x_k, x_{k+1}, \ldots \rangle$ with any positive cone $Q$ corresponding to a dense ordering of $\langle x_k, x_{k+1}, \ldots \rangle \cong F_{\infty}$.  Now using the short exact sequence 
\[ 1 \rightarrow \langle x_k, x_{k+1}, \ldots \rangle \stackrel{i}{\rightarrow} F_{\infty} \stackrel{h}{\rightarrow} F_{k-1} \rightarrow 1
\]
we lexicographically order $F_{\infty}$ using the positive cone $P' = i(Q) \cup h^{-1}(P \cap F_{k-1})$.  The result is a positive cone $P' \in \bigcap_{i=1}^n U_{g_i}$ whose corresponding ordering is dense.  We conclude that $\overline{\mathrm{D}(F_{\infty})} = \mathrm{LO}(F_{\infty})$. \qed
\end{example}

\bibliographystyle{plain}

\bibliography{dense}

\def\cprime{$'$}
\begin{thebibliography}{10}

\bibitem{Ault72}
J.~C. Ault.
\newblock Right-ordered locally nilpotent groups.
\newblock {\em J. London Math. Soc. (2)}, 4:662--666, 1972.

\bibitem{Clay10}
Adam Clay.
\newblock Isolated points in the space of left orderings of a group.
\newblock {\em Groups Geom. Dyn.}, 4(3):517--532, 2010.

\bibitem{Clay12}
Adam Clay.
\newblock Left orderings and quotients of the braid groups.
\newblock {\em J. Knot Theory Ramifications}, 21(14):1250130, 9, 2012.

\bibitem{DDRW08}
Patrick Dehornoy, Ivan Dynnikov, Dale Rolfsen, and Bert Wiest.
\newblock {\em Ordering braids}, volume 148 of {\em Mathematical Surveys and
  Monographs}.
\newblock American Mathematical Society, Providence, RI, 2008.

\bibitem{DD01}
T.~V. Dubrovina and N.~I. Dubrovin.
\newblock On braid groups.
\newblock {\em Mat. Sb.}, 192(5):53--64, 2001.

\bibitem{Ito13}
Tetsuya Ito.
\newblock Dehornoy-like left orderings and isolated left orderings.
\newblock {\em J. Algebra}, 374:42--58, 2013.

\bibitem{Linnell11}
Peter~A. Linnell.
\newblock The space of left orders of a group is either finite or uncountable.
\newblock {\em Bull. Lond. Math. Soc.}, 43(1):200--202, 2011.

\bibitem{Navas10}
Andr\'es Navas.
\newblock On the dynamics of (left) orderable groups.
\newblock {\em Ann. Inst. Fourier (Grenoble)}, 60(5):1685--1740, 2010.

\bibitem{NW11}
Andr\'{e}s Navas and Bert Wiest.
\newblock Nielsen-{T}hurston orders and the space of braid orderings.
\newblock {\em Bull. Lond. Math. Soc.}, 43(5):901--911, 2011.

\bibitem{Rhemtulla84}
A.~H. Rhemtulla, A.~Weiss, and M.~Yousif.
\newblock Solvable groups with {$\pi $}-isolators.
\newblock {\em Proc. Amer. Math. Soc.}, 90(2):173--177, 1984.

\bibitem{Rivas12}
Crist\'{o}bal Rivas.
\newblock Left-orderings on free products of groups.
\newblock {\em J. Algebra}, 350:318--329, 2012.

\bibitem{Sikora04}
Adam~S. Sikora.
\newblock Topology on the spaces of orderings of groups.
\newblock {\em Bull. London Math. Soc.}, 36(4):519--526, 2004.

\end{thebibliography}

\end{document}